\titleformat*{\section}{\normalsize\bfseries}
\titleformat*{\subsection}{\normalsize\it\bfseries}
\titlespacing*{\section}{0pt}{6pt}{6pt}
\numberwithin{equation}{section}
\newcommand{\sref}[2]{\hyperref[#2]{#1 \ref{#2}}}
\newcommand{\myeqref}[1]{\hyperref[#1]{\eqref{#1} }}
\newcommand\keywords[1]{\text{keywords}: #1 }
\newcommand{\as}{$\operatorname{\mathbb{P}-a.s.}$}
\newtheorem{theorem}{Theorem}[section]  
\newtheorem{lemma}{Lemma}[section]  
\newtheorem{example}{Example}[section]
\newtheorem{hypothesis}{Hypothesis}[section]
\newtheorem{scheme}{Scheme}[section]
\date{\empty}
\renewenvironment{abstract}{\par\noindent\textbf{\abstractname}\ \ignorespaces}{\par\medskip}
\begin{document}

	\title{\large New Second-order Convergent
		Schemes for Solving decoupled FBSDEs}
	\author[a]{Wenbo Wang} 
	\author[a,b,$ \ast $]{Guangyan Jia}
	\affil[a]{\normalsize Zhongtai Securities Institute for Financial Studies,  Shandong University, Jinan, P.R. China}
	\affil[b]{\normalsize Shandong Province Key Laboratory of Financial Risk, Jinan, P.R. China}
	
	\vspace{0.8cm}
	\maketitle 		\thispagestyle{empty}
	\begin{spacing}{1.25}
		\begin{flushleft}
			{\small	
				\begin{abstract}
					
					\justifying \noindent
					This paper proposes a new second-order symmetric algorithm for solving decoupled forward-backward stochastic differential equations. Inspired by the alternating direction implicit splitting method for partial differential equations, we split the generator into the sum of two functions. In the computation of the value process Y, explicit and implicit schemes are alternately applied to these two generators, while the algorithms from \citep{ZhaoLi2014} are used for the control process Z. We rigorously prove that the two new schemes have second-order convergence rate. The proposed splitting methods show clear advantages for equations whose generator consists of a linear part plus a nonlinear part, as they reduce the number of iterations required for solving implicit schemes, thereby decreasing computational cost while maintaining second-order convergence. Two numerical examples are provided, including the backward stochastic Riccati equation arising in mean-variance hedging. The numerical results verify the theoretical error analysis and demonstrate the advantage of reduced computational cost compared to the algorithm in \citep{ZhaoLi2014}.

	\end{abstract}	
	\keywords{BSDEs, symmetric algorithm, alternating direction implicit, second-order convergent} 
}
\end{flushleft}
\end{spacing}

\section{Introduction}
This paper focuses on  numerical solution of the  forward-backward stochastic differential equation (FBSDE)

\begin{align}\label{eq1}
\begin{cases}
X_t = X_0 + \int_0^t b(s, X_s) \, ds + \int_0^t \sigma(s, X_s) \, dW_s, \quad \text{(SDE)} \\
Y_t = \Phi(X_T) + \int_t^T f(s, X_s, Y_s, Z_s) \, ds - \int_t^T Z_s \, dW_s, \quad \text{(BSDE)}
\end{cases}
\end{align}
where $T>0,\,W = (W^{1},\,\dots,\,W^{d})$ denotes a standard $ d $-dimensional Brownian motion defined on a filtered probability space
$(\Omega,\, \mathcal{F},\, \mathbb{F},\, \mathbb{P})$. Here, $\mathbb{F}=(\mathcal{F}_{s})_{s \in [0, \, T]}$ is the $\mathbb{P}$-completion of the filtration generated by $W$. $X_0$ is the initial condition of the forward stochastic differential equation (SDE). $b:[0, T] \times  \mathbb{R}^{d}  \mapsto \mathbb{R}^{d}$ and $\sigma: [0, T] \times  \mathbb{R}^{d} \mapsto \mathbb{R}^{d\times d}$ are the drift coefficient and the diffusion matrix of the SDE, respectively. $\Phi:\mathbb{R}^{d} \mapsto \mathbb{R}^{k} $ and $f: [0, T] \times  \mathbb{R}^{d} \times \mathbb{R}^{k} \times \mathbb{R}^{k\times d} \mapsto \mathbb{R}^{k} $ are the terminal and generator functions of the backward stochastic differential equation (BSDE), respectively. Note that the two integrals with respect to $W_s$ in \myeqref{eq1} are the It\^o-type integrals.

A solution of FBSDE\myeqref{eq1} is defined as a triple $(X,\,Y,\,Z):=\{(X_{t},\,Y_{t},\,Z_{t})\}_{t \in [0,T]} $ fulfilling that it is \(\mathcal{F}_{t}-\)adapted, square integrable, and \as, \myeqref{eq1} holds. When $ k = 1 $, nonlinear BSDEs are pioneered under the Lipschitz condition by \citep{Peng1990}.  
\citep{Peng1991} proved the nonlinear Feynman-Kac formula and established a profound link between the solutions of FBSDEs and quasilinear parabolic partial differential equations (PDEs). Namely, under specific regularity conditions, the solution \((X,\,Y, Z)\) to FBSDE\myeqref{eq1} admits the following representation
\[ Y_{t} = u(t,X_{t}),\, Z_{t} = (u_{x}\sigma)(t,X_{t}),    \]
where \(u: [0, T] \times  \mathbb{R}^{d} \mapsto \mathbb{R}^{k}\) is the solution to the parabolic PDE below:
\begin{align}\label{eq2}
\partial_t u + \frac{1}{2} \sum_{i,j=1}^d [\sigma\sigma^*]_{i,j} \partial_{x_i x_j}^2 u + \sum_{i=1}^d b_i\partial_{x_i} u + H(t, x, u, u_x \sigma) = 0
\end{align}
with terminal condition $u(T,\, \cdot)=\Phi(\cdot),$ where
\[
u_x(t, x) = 
\begin{bmatrix}
\frac{\partial u_1}{\partial x_1} & \frac{\partial u_1}{\partial x_2} & \cdots & \frac{\partial u_1}{\partial x_d} \\
\frac{\partial u_2}{\partial x_1} & \frac{\partial u_2}{\partial x_2} & \cdots & \frac{\partial u_2}{\partial x_d} \\
\vdots & \vdots & \ddots & \vdots \\
\frac{\partial u_n}{\partial x_1} & \frac{\partial u_n}{\partial x_2} & \cdots & \frac{\partial u_n}{\partial x_d}
\end{bmatrix},
\]
\[
\partial_{x_i} u(t, x) = 
\begin{bmatrix}
\frac{\partial u_1}{\partial x_i} \\
\frac{\partial u_2}{\partial x_i} \\
\vdots \\
\frac{\partial u_n}{\partial x_i}
\end{bmatrix},
\]

\[
\partial_{x_i x_j}^2 u(t, x) = 
\begin{bmatrix}
\frac{\partial^2 u_1}{\partial x_i \partial x_j} \\
\frac{\partial^2 u_2}{\partial x_i \partial x_j} \\
\vdots \\
\frac{\partial^2 u_n}{\partial x_i \partial x_j}
\end{bmatrix}
\]
The nonlinear Feynman-Kac formula not only provides a probabilistic representation for the solutions of a class of partial differential equations, but also characterizes the structure of solutions to FBSDEs. This formula plays a crucial role in the fundamental theoretical research of FBSDEs and also serves as an important research tool for investigating the numerical solutions of FBSDEs. Based on the structure of FBSDE solutions derived from the nonlinear Feynman-Kac formula, this paper studies and proposes fully discrete numerical schemes for the BSDE \myeqref{eq1}.

FBSDEs have found significant applications in numerous research fields, such as mathematical finance\citep{Karoui1997, MaYong1999}, stochastic optimal control\citep{Peng1999}, nonlinear expectation\citep{1997Backward,Gianin2006}, and the theory of partial differential equations (PDEs). Obtaining analytical solutions for FBSDEs is typically a challenging endeavor, thereby driving significant demand for effective numerical methodologies. Recent academic research has witnessed extensive exploration into the development of robust numerical approaches for solving these equations. One of the most commonly employed strategies involves the direct discretization of FBSDEs, giving rise to a wide spectrum of discretization schemes \citep{Ma1994,Milstein2006,Zhao2006,Bender2008,Henry2014}. Popular temporal discretization methods include Euler-type schemes \citep{Gobet2005,Zhang2004}, generalized \(\theta\)-methods \citep{Zhao2012,ZhaoLi2014,Wang2022}, multistep methods \citep{Zhao2010,Zhao2014,Zhao2016,Liu2020,Liu2022}, Runge-Kutta techniques \citep{Chassagneux2014}, , strong stability-preserving multistep methods \citep{Fang2023,Fang2023b}, and extrapolation schemes\citep{XuZhao2024a,XuZhao2024b}, among others. Furthermore, numerous numerical schemes for FBSDEs have been proposed based on the nonlinear Feynman-Kac formula, along with the numerical solutions of parabolic PDEs linked to FBSDEs, as illustrated in the works \citep{ Ma1994,Douglas1996,Milstein2006}.

Recently, \citep{Zheng2025} proposed a splitting method for BSDEs by incorporating the idea of locally one-dimensional (LOD) methods \citep{Yanenko1963} that developed for solving multi-dimensional PDEs. They decomposed the original $d$-dimensional BSDE into $d$ BSDEs and solved them separately—this approach significantly enhances computational efficiency, with the first-order convergence theoretically proven. Inspired by the alternating direction implicit (ADI) method of Peaceman and Rachford \citep{Peaceman1955}, we propose two new second-order convergent schemes for solving decoupled FBSDEs. Complementary to this, \citep{ZhaoLi2014} extended the second-order scheme proposed for BSDEs in \citep{Zhao2006} to the decoupled FBSDEs. By Malliavin calculus theory they proposed two new numerical schemes and proved that the both schemes have second-order convergence rate. In this paper, we split the generator $f$ into the sum of two functions. In the computation of the value process $Y$, explicit and implicit schemes are alternately applied to these two generators, while the algorithms from \citep{ZhaoLi2014} are used for the control process $Z$. We rigorously prove that the two new schemes have second-order convergence rate. The proposed splitting methods show clear advantages for equations whose generator consists of a linear part and a nonlinear part, as they reduce the number of iterations required for solving implicit schemes, thereby decreasing computational cost while maintaining second-order convergence. Two numerical examples are provided, including the backward stochastic Riccati equation arising in mean-variance hedging. The numerical results verify the theoretical error analysis and demonstrate the advantage of reduced computational cost compared to the algorithm in \citep{ZhaoLi2014}.

The structure of this paper is as follows. \sref{Sections}{sec2} introduces preliminaries and splitting schemes. \sref{Sections}{sec3} proves the proposed schemes has second-order convergence in time for solving BSDEs. In\sref{Sections}{sec4}, we  present several numerical tests to show the accuracy of our splitting schemes compared with results in \citep{Zheng2025}.We introduce the following notations:
\begin{itemize}
\item $(\cdot)^\top$: The transpose operator for a matrix or vector; $(\cdot)^{-1}$: the inverse operator for a matrix.
\item $|\cdot|$: The norm for vector or matrix defined by $|c|^2 = \text{trace}(c^\top c)$.
\item $C_b^{l,k,k,k}$: The set of continuously differentiable functions $\psi: [0,T] \times \mathbb R^d \times \mathbb R^n \times \mathbb R^{n \times d} \rightarrow \mathbb R$ with uniformly bounded partial derivatives $\partial_t^{l_1} \psi$ and $\partial_X^{k_1} \partial_Y^{k_2} \partial_Z^{k_3} \psi$ for $l_1 \leq l$ and $k_1 + k_2 + k_3 \leq k$. The notations $C_b^{l,k}$ and $C_b^{l,k,k}$ are similarly defined.
\item $C_b^{k,\alpha}$ ($\alpha \in (0,1)$): $\phi \in C_b^{k,\alpha}$ means that it has uniformly bounded partial derivatives up to order $k$, and $\partial_x^k \phi(x)$ is H\"older continuous with index $\alpha$ w.r.t. $x$.
\item $\mathcal F_s^{t,x}$ ($t \leq s \leq T$): $\sigma$-field generated by the diffusion process $\{X_r, t \leq r \leq s, X_t = x\}$ starting from the time-space point $(t,x)$. When $s = T$, we use $\mathcal F^{t,x}$ to denote $\mathcal F_T^{t,x}$.
\item $\mathbb E_s^{t,x}[\eta]$: The conditional mathematical expectation of the random variable $\eta$ under the $\sigma$-field $\mathcal F_s^{t,x}$ $(t \leq s \leq T)$, i.e., $\mathbb E_s^{t,x}[\eta] = \mathbb E[\eta \mid \mathcal F_s^{t,x}]$. Let $\mathbb E_t^x[\eta] = \mathbb E[\eta \mid \mathcal F_t^{t,x}]$.
\end{itemize}

\section{Preliminaries and splitting schemes}\label{sec2}

Suppose that $H$ is a real separable Hilbert space with scalar product denoted by $\langle \cdot, \cdot \rangle_H$. The norm of an element $h \in H$ will be denoted by $\|h\|_H$. Let $\mathcal{B} = \{B(h), h \in H\}$ denote an isonormal Gaussian process associated with the Hilbert space $H$ on $(\Omega, \mathcal F, \mathbb F, \mathbb P)$.

Let the operator $D^k$ \citep{nualart1997} be the Malliavin derivative of order $k$. For any integer $p \geq 1$, $D^{k,p}$ is the domain of $D^k$ ($k \in \mathbb{N}$) in $L^p(\Omega)$, i.e., $D^{k,p}$ is the closure of the class of smooth random variables $F$ with respect to the norm $\|F\|_{k,p} = \left[ \mathbb E[|F|^p] + \sum_{j=1}^k  \mathbb E[\|D^j F\|_H^p] \right]^{1/p}$. For $p = 2$, the space $D^{1,2}$ is a Hilbert space with the scalar product
\[
\langle F, G \rangle = \mathbb E[FG] + \mathbb E[\langle DF, DG \rangle_H].
\]

For the Malliavin derivative operator $D$, we introduce the following two lemmas.

\begin{lemma}[See \citep{nualart1997}]
For $F \in D^{1,2}$ and $u \in L^2(\Omega; H)$, we have
\[
D_t \int_0^T u_s  ds = \int_t^T D_t u_s  ds, \quad 0 \leq t \leq T, \quad D_t \int_0^T u_s  dW_s = u_t + \int_t^T D_t u_s  dW_s, \quad 0 < t \leq T,
\]
and the following integration-by-parts formula:
\[
\mathbb E\left[ \int_0^T u_t D_t F  dt \right] = \mathbb E\left[ F \int_0^T u_t  dW_t \right].
\]
\end{lemma}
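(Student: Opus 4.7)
The plan is to establish each of the three identities by first verifying them on a dense subclass of ``nice'' random variables and processes where all expressions are explicit, and then extending to the full domain via a continuity/closability argument for the Malliavin derivative $D$. The natural test class consists of smooth cylindrical random variables $F = g(B(h_1),\ldots,B(h_n))$ with $g \in C_b^\infty$ and of simple processes $u_s = \sum_{i=1}^n F_i\,\mathbf{1}_{(t_{i-1},t_i]}(s)$ with $F_i$ smooth cylindrical and, when needed, $\mathcal{F}_{t_{i-1}}$-measurable so that $\int_0^T u_s\,dW_s$ is an It\^o integral.

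For identity 1 the computation reduces to linearity: $\int_0^T u_s\,ds = \sum_i F_i(t_i - t_{i-1})$, so $D_t\!\int_0^T u_s\,ds = \sum_i (D_t F_i)(t_i - t_{i-1})$, which matches $\int_t^T D_t u_s\,ds$ after noting $D_t u_s = (D_t F_i)\mathbf{1}_{(t_{i-1},t_i]}(s)$ and using adaptedness to kill the terms with $t_{i-1} \geq t$ that drop out of the truncation. For identity 2 the It\^o integral on the same simple processes is $\sum_i F_i(W_{t_i}-W_{t_{i-1}})$; applying $D_t$ via the product rule for Malliavin derivatives, together with the basic formula $D_t W_s = \mathbf{1}_{[0,s]}(t)$, produces a ``boundary'' contribution $\sum_i F_i\,\mathbf{1}_{(t_{i-1},t_i]}(t) = u_t$ plus $\sum_i (D_t F_i)(W_{t_i}-W_{t_{i-1}})$; an adaptedness argument (each $F_i$ is $\mathcal{F}_{t_{i-1}}$-measurable, so $D_t F_i = 0$ for $t > t_{i-1}$) shows term-by-term that this latter sum coincides with $\int_t^T D_t u_s\,dW_s$. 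Identity 3 is the classical duality between $D$ and the Skorohod integral $\delta$, $\mathbb{E}[\langle DF, u\rangle_H] = \mathbb{E}[F\,\delta(u)]$; since $H = L^2([0,T])$ identifies $\langle DF, u\rangle_H$ with $\int_0^T u_t D_t F\,dt$, and since $\delta$ restricted to adapted integrands coincides with the It\^o integral, the stated formula follows. The duality itself is obtained on the test class by a finite-dimensional Gaussian integration-by-parts applied to the density of the Gaussian vector $(B(h_1),\ldots,B(h_n))$.

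The main obstacle, and the one I would not try to reproduce here, is the passage from the explicit test class to the full space $D^{1,2}$ (or more generally $D^{1,p}$). This requires three foundational facts: closability of $D$ as an unbounded operator $L^p(\Omega) \to L^p(\Omega;H)$, density of the cylindrical/simple classes in the Sobolev-type norms $\|\cdot\|_{1,p}$, and joint $L^p$-continuity of each side of the three identities under these norms. Together these form the backbone of the Malliavin calculus framework and are exactly what \citep{nualart1997} supplies; once they are granted, the general statements extend by passing to the limit in the explicit simple-process computations above, which is why in the present paper the lemma is imported by citation rather than proved from scratch.
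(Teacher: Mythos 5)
The paper offers no proof of this lemma at all --- it is imported verbatim from \citep{nualart1997} as a standing tool, so there is nothing in the text to compare your argument against. Your outline is the standard one from that reference (verification on smooth cylindrical functionals and simple adapted processes, the product rule with $D_tW_s=\mathbf{1}_{[0,s]}(t)$ producing the boundary term $u_t$, the duality $\mathbb{E}[\langle DF,u\rangle_H]=\mathbb{E}[F\,\delta(u)]$ with $\delta$ reducing to the It\^o integral on adapted integrands, and extension by closability and density), and it is correct as a sketch; the one small slip is in identity 1, where adaptedness kills the terms with $t_{i-1}<t$ (since then $D_tF_i=0$), while the truncation to $\int_t^T$ is what removes the intervals lying entirely before $t$ --- you have the roles of the two mechanisms slightly crossed, but the conclusion is unaffected.
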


\begin{lemma}[See \citep{nualart1997}]
If the solution $(X_t, Y_t, Z_t)$ to (1.1) is in $D^{1,2}$, we have
\begin{align*}
D_s X_t I_{s \leq t} &= \nabla_x X_t (\nabla_x X_s)^{-1} \sigma(s, X_s) I_{s \leq t}, \\
Z_t &= D_t Y_t = \nabla_x Y_t (\nabla_x X_t)^{-1} \sigma(t, X_t), \\
D_s Y_t I_{s \leq t} &= \nabla_x Y_t (\nabla_x X_s)^{-1} \sigma(s, X_s) I_{s \leq t} = Z_t \sigma^{-1}(t, X_t) \nabla_x X_t (\nabla_x X_s)^{-1} \sigma(s, X_s) I_{s \leq t}.
\end{align*}
\end{lemma}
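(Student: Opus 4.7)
The plan is to derive each identity by differentiating the defining equations in the Malliavin sense and then invoking uniqueness of linear SDEs/BSDEs. First I would treat the forward equation. Applying $D_s$ to the SDE for $X_t$ and using the commutation rules from the previous lemma yields, for $s\le t$,
\begin{equation*}
D_s X_t = \sigma(s,X_s) + \int_s^t \partial_x b(r,X_r)\, D_s X_r\, dr + \int_s^t \partial_x \sigma(r,X_r)\, D_s X_r\, dW_r,
\end{equation*}
so $\{D_s X_t\}_{t\ge s}$ solves a linear SDE with initial value $\sigma(s,X_s)$. Differentiating the SDE with respect to the initial datum produces the first variation process $\nabla_x X_t$, which satisfies the same linear equation but with initial value the identity matrix. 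Since $\nabla_x X_s$ is invertible (under the standard non-degeneracy assumptions), the fundamental-matrix argument gives $D_s X_t = \nabla_x X_t (\nabla_x X_s)^{-1} \sigma(s,X_s)$ on $\{s\le t\}$, which is the first claim.

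Next I would establish the identity $Z_t = \nabla_x Y_t (\nabla_x X_t)^{-1} \sigma(t,X_t)$. By the nonlinear Feynman--Kac formula recalled in the introduction, $Y_t = u(t,X_t)$ and $Z_t = u_x(t,X_t)\sigma(t,X_t)$. Differentiating the first relation in $x$ gives $\nabla_x Y_t = u_x(t,X_t)\nabla_x X_t$, hence $u_x(t,X_t) = \nabla_x Y_t (\nabla_x X_t)^{-1}$, and substituting into the expression for $Z_t$ yields the claimed identity. The equality $Z_t = D_t Y_t$ then follows by the third claim applied at $s=t$; alternatively, one can apply $D_s$ directly to the BSDE, obtaining for $s\le t$ the linear BSDE
\begin{equation*}
D_s Y_t = \partial_x\Phi(X_T)D_s X_T + \int_t^T\!\!\bigl[\partial_x f\, D_s X_r + \partial_y f\, D_s Y_r + \partial_z f\, D_s Z_r\bigr] dr - \int_t^T D_s Z_r\, dW_r,
\end{equation*}
and then use the \'Ito isometry / Clark--Ocone representation to conclude that the representing integrand at $s=t$ coincides with $Z_t$.

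For the third identity, I would either repeat the uniqueness argument of the first step applied to the linear BSDE above, or more transparently differentiate $Y_t=u(t,X_t)$ with respect to the driver $W$. The chain rule of Malliavin calculus gives $D_s Y_t = u_x(t,X_t)\, D_s X_t$ on $\{s\le t\}$. Plugging in the already proved representation of $D_s X_t$ produces
\begin{equation*}
D_s Y_t = u_x(t,X_t)\, \nabla_x X_t (\nabla_x X_s)^{-1}\sigma(s,X_s) = \nabla_x Y_t (\nabla_x X_s)^{-1}\sigma(s,X_s),
\end{equation*}
and using $\nabla_x Y_t = Z_t \sigma^{-1}(t,X_t)\nabla_x X_t$ (a rearrangement of the second identity) yields the second form of the third claim.

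The main obstacle is the rigorous justification of commuting $D_s$ with the It\^o and Lebesgue integrals and of the existence/invertibility of $\nabla_x X_s$; these are the standard differentiability and non-degeneracy assumptions (smoothness of $b$, $\sigma$, $\Phi$, $f$ and uniform ellipticity of $\sigma$) that are implicit in the hypothesis $(X,Y,Z)\in D^{1,2}$ and that are handled in detail in \citep{nualart1997}. Once that machinery is granted, the proof reduces to the two uniqueness arguments and the chain-rule computation outlined above.
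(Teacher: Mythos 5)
The paper offers no proof of this lemma: it is quoted verbatim from the literature (Nualart; see also Pardoux--Peng and El Karoui--Peng--Quenez), so there is no in-paper argument to compare yours against. Your sketch is the standard derivation and is essentially correct: differentiating the forward SDE in the Malliavin sense, identifying $\{D_sX_t\}_{t\ge s}$ and the first variation process $\nabla_xX_t$ as solutions of the same linear SDE with initial data $\sigma(s,X_s)$ and $I_d$ respectively, and invoking the fundamental-matrix/uniqueness argument gives the first identity; the chain rule $D_sY_t=u_x(t,X_t)D_sX_t$ then gives the third.

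One point deserves care. You obtain $Z_t=\nabla_xY_t(\nabla_xX_t)^{-1}\sigma(t,X_t)$ by invoking the Feynman--Kac identification $Z_t=u_x(t,X_t)\sigma(t,X_t)$, and then recover $Z_t=D_tY_t$ from the third identity at $s=t$. In the references this logic usually runs the other way: the identity $Z_t=D_tY_t$ is established \emph{first}, directly from the BSDE, by writing it in forward form $Y_t=Y_0-\int_0^t f\,dr+\int_0^t Z_r\,dW_r$, applying the rule $D_s\int_0^t Z_r\,dW_r=Z_s+\int_s^t D_sZ_r\,dW_r$ from the preceding lemma, and letting $s\uparrow t$ to identify a version of $Z$; the representation $Z_t=u_x\sigma$ is then a \emph{consequence}. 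Since that representation is itself typically proved via $Z_t=D_tY_t$, your route has a latent circularity unless you take the Feynman--Kac formula as an independent input (which the paper's introduction does, so within this paper your argument is admissible). If you want a self-contained proof in the spirit of the cited source, replace the Feynman--Kac step by the forward-form/limit argument just described; the rest of your proposal (commutation of $D_s$ with the integrals, invertibility of $\nabla_xX_s$ under the stated smoothness and nondegeneracy of $\sigma$) is exactly the machinery the reference supplies.
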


For $0\le t \le s\le T, $ let $ \Delta W_{t,s}:= (\Delta W^{1}_{t,s},\dots,\Delta W^{d}_{t,s})^{T}$ represent the increment $W_{s}-W_{t}$ of the Brownian motion $(W_{t})_{t\in[0,T]}.$ 
For the time interval $[0,\,T],$  we consider the uniform partition \( t_{n} = n \Delta t,\,n = 0,\,1,\,\dots, N\) with $\Delta t  = T/N.$ Let $\Delta W_{t_{n+1}}:= \Delta W_{t_{n},t_{n+1}}.$ Then $\Delta W_{t_{n+1}}$ admits the Gaussian distribution with mean zero and variance $\Delta t.$

\subsection{Numerical schemes for solving SDEs}
Since the FBSDEs (1.1) are decoupled, the SDE and the BSDE in (1.1) can be solved separately. There have been many numerical schemes for solving SDEs in literature, such as the Euler scheme, the Milstein scheme and the It\^o-Taylor type schemes \citep{Kloeden1992}.

We assume that the numerical scheme used for solving the SDE in (1.1) is in the following form:
\begin{align}\label{eq3}
X^{n+1} = X^n + \psi(t_n, X^n, \Delta t, \xi_n^{n+1}), \quad n = 0,1,\ldots,N-1,
\end{align}
where $X^0 = X_0$, $\phi$ is a given $\mathbb R^d$-valued function, and $\xi_n^{n+1}$ is a random vector related to $\Delta W_{t_{n+1}} = W_{t_{n+1}} - W_{t_n}$, , and under some conditions on the coefficients $b$ and $\sigma$ the function $\psi$ satisfiesthe following two Hypothesis.

\begin{hypothesis}\label{H2.1}  For $n = 0, 1, \ldots, N-1$, the function $\phi$ in equation (2.1) has the estimate
\[
\mathbb E_{t_n}^{X^n}\left[|\psi(t_n, X^n, \Delta t, \xi_n^{n+1})|^2\right] \leq C\Delta t,
\]
and the Malliavin derivative $D_t X^{n+1}$ of $X^{n+1}$ defined by \myeqref{eq3} has the approximate property
\[
\mathbb E_{t_n}^{X^n}\left[|D_t X^{n+1} - \sigma^n|\right] \leq C\sqrt{\Delta t}, \quad t_n < t \leq t_{n+1}.
\]
\end{hypothesis}
\begin{hypothesis} \label{H2.2} The approximation solution $X^{n+1}$ to \myeqref{eq3}, for $n = 0, 1, 2, \ldots, N-1$, have the properties
\begin{align*}
&\left|\mathbb E_{t_n}^{X^n}\left[g(X_{t_{n+1}}^{t_n, X^n}) - g(X^{n+1})\right]\right| \leq C_g(\Delta t)^{\beta+1}, \\
&\left|\mathbb E_{t_n}^{X^n}\left[(g(X_{t_{n+1}}^{t_n, X^n}) - g(X^{n+1}))\Delta W_{t_{n+1}}^\top\right]\right| \leq C_g(\Delta t)^{\gamma+1},
\end{align*}
where $g \in C_b^{2\beta+2}$, $\beta$ and $\gamma$ are nonnegative numbers, and $C_g$ is a positive number which does not depend on the time partition.
\end{hypothesis}
Numerical schemes for one-dimensional and multi-dimensional SDEs have been extensively studied (see e.g.\citep{Kloeden1992}), and many of them can be written in the form of equation (2.1), where the function $\phi$ satisfies the above two hypotheses. The following are three examples for one-dimensional SDEs that illustrate Hypothesis 2.1:

\begin{enumerate}
\item \textbf{The Euler scheme} is in the form of (2.1) with $\phi$ defined by
\[
\psi = b^n\Delta t + \sigma^n\xi_n^{n+1}, \quad \xi_n^{n+1} = \Delta W_{t_{n+1}}.
\]
The Malliavin derivative is given by
\begin{equation*}
D_t X^{n+1} = \sigma^n, \quad t_n < t \leq t_{n+1}. 
\end{equation*}

\item \textbf{The Milstein scheme} is in the form of (2.1) with $\phi$ defined by
\[
\psi = b^n\Delta t + \sigma^n\xi_n^{n+1} + \frac{1}{2}\sigma^n\sigma_x^n\left((\xi_n^{n+1})^2 - \Delta t\right), \quad \xi_n^{n+1} = \Delta W_{t_{n+1}}.
\]
The Malliavin derivative is given by
\begin{equation*}
D_t X^{n+1} = \sigma^n + \sigma^n\sigma_x^n\Delta W_{t_{n+1}}, \quad t_n < t \leq t_{n+1}. 
\end{equation*}

\item \textbf{The order-2 weak It\^o-Taylor type scheme} is given by
\begin{align*}
X^{n+1} = & X^n + b^n\Delta t + \sigma^n\xi_n^{n+1} + \frac{1}{2}\sigma^n\sigma_x^n\left((\xi_n^{n+1})^2 - \Delta t\right) \\
& + \frac{1}{2}\left[\sigma_t^n + \sigma^n b_x^n + b^n\sigma_x^n + \frac{1}{2}(\sigma^n)^2\sigma_{xx}^n\right]\Delta t\xi_n^{n+1} \\
& + \frac{1}{2}\left[b_t^n + b^n b_x^n + (\sigma^n)^2 b_{xx}^n\right](\Delta t)^2, 
\end{align*}
with $\xi_n^{n+1} = \Delta W_{t_{n+1}}$.

The Malliavin derivative is given by
\begin{align*}
D_t X^{n+1} = & \sigma^n + \sigma^n\sigma_x^n\Delta W_{t_{n+1}} \\
& + \frac{\Delta t}{2}\left[\sigma_t^n + \sigma^n b_x^n + b^n\sigma_x^n + \frac{1}{2}(\sigma^n)^2\sigma_{xx}^n\right], \quad t_n < t \leq t_{n+1}. 
\end{align*}
\end{enumerate}

Here, for a function $v = v(t, x)$, we use $v^n$ to denote $v(t_n, X^n)$.

\subsection{Reference equations for solving BSDEs}
Let the triple $(X_t^{t_n, X^n}, Y_t^{t_n, X^n}, Z_t^{t_n, X^n})_{t \in [t_n, T]}$ be the solution to the following FBSDE:
\begin{align}\label{eq2.2}
\begin{cases}
X_t^{t_n, X^n} = X^n + \int_{t_n}^t b(s, X_s^{t_n, X^n})  ds + \int_{t_n}^t \sigma(s, X_s^{t_n, X^n})  dW_s, \\
Y_t^{t_n, X^n} = \Phi(X_T^{t_n, X^n}) + \int_t^T f(s, X_s^{t_n, X^n}, Y_s^{t_n, X^n}, Z_s^{t_n, X^n})  ds - \int_t^T Z_s^{t_n, X^n}  dW_s
\end{cases}
\end{align}
Then
\begin{align}\label{eq2.3}
Y_{t_n}^{t_n, X^n} = Y_{t_{n+1}}^{t_n, X^n} + \int_{t_n}^{t_{n+1}} f_s^{t_n, X^n} ds - \int_{t_n}^{t_{n+1}} Z_s^{t_n, X^n} dW_s,
\end{align}
where $f_s^{t_n, X^n} := f(s, X_s^{t_n, X^n}, Y_s^{t_n, X^n}, Z_s^{t_n, X^n})$. Taking $\mathbb E_{t_n}^{X^n}[\cdot]$ on both sides of \myeqref{eq2.3} leads to
\begin{align}
Y_{t_n}^{t_n, X^n} = \mathbb E_{t_n}^{X^n}[Y_{t_{n+1}}^{t_n, X^n}] + \int_{t_n}^{t_{n+1}} \mathbb E_{t_n}^{X^n}[f_s^{t_n, X^n}] ds.
\end{align}
Next,\citep{ZhaoLi2014} used trapezoidal rule to approximate $\int_{t_n}^{t_{n+1}} \mathbb E_{t_n}^{X^n}[f_s^{t_n, X^n}] ds,$ while we use an alternating implicit scheme.Specifically, we define $f_{i}:[0, T] \times  \mathbb{R}^{d} \times  \mathbb{R}^{k} \times  \mathbb{R}^{k\times d}\mapsto \mathbb{R}^{k},\,i=1,2 $ such that $f=f_{1}+f_{2}.$  Without loss of generality, we assume $N$ is an even number throughout this paper. For $k = 0,1,\dots,N/2-1,$ consider the following cases.

Case 1: If $n=2k,$ we have
\begin{align}\label{evenyn}
Y_{t_n}^{t_n, X^n} &= \mathbb E_{t_n}^{X^n}[Y_{t_{n+1}}^{t_n, X^n}] + \Delta t f_{1,t_n}^{t_n, X^n} +  \Delta t \mathbb E_{t_n}^{X^n}[f_{2,t_{n+1}}^{t_n, X^n}] + R_Y^{n} \nonumber\\
&= \mathbb E_{t_n}^{X^n}[Y_{t_{n+1}}^{t_{n+1}, X^{n+1}}] + \Delta t f_{1,t_n}^{t_n, X^n} +\Delta t \mathbb E_{t_n}^{X^n}[f_{2,t_{n+1}}^{t_{n+1}, X^{n+1}}] + R_Y^{n} + R_{Y1}^{n},
\end{align}
where
\begin{equation}\label{Rny}
R_Y^{n} = \int_{t_n}^{t_{n+1}} \left\{ \mathbb E_{t_n}^{X^n}[f_s^{t_n, X^n}] - \mathbb E_{t_n}^{X^n}[f_{2,t_{n+1}}^{t_n, X^n}] - f_{1,t_n}^{t_n, X^n} \right\} ds, 
\end{equation}
\begin{equation}\label{Rny1}
R_{Y1}^{n} = \mathbb E_{t_n}^{X^n}[Y_{t_{n+1}}^{t_n, X^n} - Y_{t_{n+1}}^{t_{n+1}, X^{n+1}}] + \Delta t \mathbb E_{t_n}^{X^n}[f_{2,t_{n+1}}^{t_n, X^n} - f_{2,t_{n+1}}^{t_{n+1}, X^{n+1}}].
\end{equation}
Case 2: If $n=2k+1$, we have
\begin{align}\label{oddyn}
Y_{t_n}^{t_n, X^n} &= \mathbb E_{t_n}^{X^n}[Y_{t_{n+1}}^{t_n, X^n}] +  \Delta t f_{2,t_n}^{t_n, X^n} +  \Delta t \mathbb E_{t_n}^{X^n}[f_{1,t_{n+1}}^{t_n, X^n}] +\tilde R_Y^{n}\nonumber \\
&= \mathbb E_{t_n}^{X^n}[Y_{t_{n+1}}^{t_{n+1}, X^{n+1}}] + \Delta t f_{2,t_n}^{t_n, X^n} + \Delta t \mathbb E_{t_n}^{X^n}[f_{1,t_{n+1}}^{t_{n+1}, X^{n+1}}] + \tilde R_Y^{n} + \tilde R_{Y1}^{n},
\end{align}
where
\begin{equation}\label{tRny}
\tilde R_Y^{n} = \int_{t_n}^{t_{n+1}} \left\{ \mathbb E_{t_n}^{X^n}[f_s^{t_n, X^n}] -  \mathbb E_{t_n}^{X^n}[f_{1,t_{n+1}}^{t_n, X^n}] -  f_{2,t_n}^{t_n, X^n} \right\} ds, 
\end{equation}
\begin{equation}\label{tRny1}
\tilde R_{Y1}^{n} = \mathbb E_{t_n}^{X^n}[Y_{t_{n+1}}^{t_n, X^n} - Y_{t_{n+1}}^{t_{n+1}, X^{n+1}}] + \Delta t \mathbb E_{t_n}^{X^n}[f_{1,t_{n+1}}^{t_n, X^n} - f_{1,t_{n+1}}^{t_{n+1}, X^{n+1}}].
\end{equation}

Note that $R_n, R_{Y1}^{n} \tilde R_Y^{n},$ and $\tilde R_{Y1}^{n}$ are defined for all $n = 0,1,\dots,N,$ not only for even or odd $ n$. This is useful for the subsequent error estimates. We use two schemes for solving $Z$ in \citep{ZhaoLi2014} (see equation (2.19) and (2.24) in \citep{ZhaoLi2014} ), they are respectively
\begin{align} \label{zn1}
\Delta t_n Z_{t_n}^{t_n, X^n} = & -\frac{1}{2}\Delta t \mathbb{E}_{t_n}^{X^n}\left[Z_{t_{n+1}}^{t_{n+1}, X^{n+1}} - Z_{t_{n+1}}^{t_{n+1}, X^{n+1}}[\sigma^{n+1}]^{-1} A^n\sigma^n\right]  + \mathbb{E}_{t_n}^{X^n}\left[Y_{t_{n+1}}^{t_{n+1}, X^{n+1}}\Delta W_{t_{n+1}}^\top\right] \nonumber\\
& + \Delta t \mathbb{E}_{t_n}^{X^n}\left[f(t_{n+1}, X^{n+1}, Y^{n+1}, Z^{n+1})\Delta W_{t_{n+1}}^\top\right]  + 2 R_Z^n +  R_{Z 1}^n,
\end{align}

\begin{align} \label{zn2}
\Delta t Z_{t_n}^{t_n, X^n} = & \mathbb{E}_{t_n}^{X^n}\left[Z_{t_{n+1}}^{t_{n+1}, X^{n+1}}\left([\sigma^{n+1}]^{-1}\int_{t_n}^{t_{n+1}} D_t X^{n+1} dt - \Delta t\right)\right]+ \mathbb{E}_{t_n}^{X^n}\left[Y_{t_{n+1}}^{t_{n+1}, X^{n+1}}\Delta W_{t_{n+1}}^\top\right] \nonumber\\
& + \Delta t \mathbb{E}_{t_n}^{X^n}\left[f(t_{n+1}, X^{n+1}, Y^{n+1}, Z^{n+1})\Delta W_{t_{n+1}}^\top\right] + 2 R_Z^n + R_{Z 2}^n,
\end{align}
where $\sigma^{n} = \sigma(t_{n}, X^{n})$, 
\(A^{n} = I_{d} + \partial_{x}b^{n}\Delta t + \sum_{j=1}^{d}\partial_{x}\sigma_{j}^{n}\Delta W_{t_{n+1}}^{j},I_{d}\) is the $d\times d$ identity matrix, $\sigma_{j}(\cdot)$ is the $j$-th column of the matrix $\sigma(\cdot)$,
\([\sigma^{n+1}]^{-1}\) is the inverse matrix of $\sigma^{n+1},$ and
\begin{align} \label{Rnz}
R_Z^n = & \int_{t_n}^{t_{n+1}} \left\{ \mathbb{E}_{t_n}^{X^n}[f_s^{t_n, X^n}\Delta W_s^\top] - \frac{1}{2}\mathbb{E}_{t_n}^{X^n}[f_{t_{n+1}}^{t_{n+1}, X^{n+1}}\Delta W_{t_{n+1}}^\top] \right\} ds \nonumber\\
& - \int_{t_n}^{t_{n+1}} \left\{ \mathbb{E}_{t_n}^{X^n}[Z_s^{t_n, X^n}] - \frac{1}{2}Z_{t_n}^{t_n, X^n} - \frac{1}{2}\mathbb{E}_{t_n}^{X^n}[Z_{t_{n+1}}^{t_n, X^n}] \right\} ds,
\end{align}

\begin{align}\label{Rnz1}
R_{Z 1}^n &= \int_{t_n}^{t_{n+1}} \mathbb{E}_{t_n}^{X^n}\left[D_t Y_{t_{n+1}}^{t_n, X^n} - \frac{1}{2}D_{t_n} Y_{t_{n+1}}^{t_n, X^n} - \frac{1}{2}D_{t_{n+1}} Y_{t_{n+1}}^{t_n, X^n}\right] dt
\nonumber \\
& +  \frac{1}{2}\Delta t \mathbb{E}_{t_n}^{X^n}\left[Z_{t_{n+1}}^{t_n, X^n}\sigma^{-1}(t_{n+1}, X_{t_{n+1}}^{t_n, X^n})(\nabla_x X_{t_{n+1}}^{t_n, X^n} - A^n)\right]\sigma(t_n, X^n)
\nonumber \\
& +  \mathbb{E}_{t_n}^{X^n}\left[(Y_{t_{n+1}}^{t_n, X^n} - Y_{t_{n+1}}^{t_{n+1}, X^{n+1}} )\Delta W_{t_{n+1}}^\top\right]- \frac{1}{2}\Delta t \mathbb{E}_{t_n}^{X^n}\left[Z_{t_{n+1}}^{t_n, X^n} - Z_{t_{n+1}}^{t_{n+1}, X^{n+1}}\right] \nonumber\\
&  + \frac{1}{2}\Delta t \mathbb{E}_{t_n}^{X^n}\left[ (Z_{t_{n+1}}^{t_n, X^n}\sigma^{-1}(t_{n+1}, X_{t_{n+1}}^{t_n, X^n})- Z_{t_{n+1}}^{t_{n+1}, X^{n+1}}\sigma^{-1}(t_{n+1}, X^{n+1})) A^n\sigma(t_n, X^n)\right],
\end{align}
\begin{align}\label{Rnz2}
R_{Z2}^n = &-\Delta t \mathbb{E}_{t_n}^{X^n}\left[Z_{t_{n+1}}^{t_n, X^n} - Z_{t_{n+1}}^{t_{n+1}, X^{n+1}}\right]+ 2\mathbb{E}_{t_n}^{X^n}\left[\left(Y_{t_{n+1}}^{t_n, X^n} - Y_{t_{n+1}}^{t_{n+1}, X^{n+1}}\right)\Delta W_{t_{n+1}}^\top\right].
\end{align}

In the following subsections, we will propose our first scheme based on the three reference equations \myeqref{evenyn}, \myeqref{oddyn}, and \myeqref{zn1}. Our second scheme based on reference equations \myeqref{evenyn}, \myeqref{oddyn}, and \myeqref{zn2}.
\subsection{Schemes}
We are ready to present our splitting schemes for BSDEs.
\begin{scheme}\label{scheme1}
Given $Y^{N} = \Phi(X^{N})$ and $Z^{N} = \Phi_{x}(X^{N})\sigma(t_{N}, X^{N})$, for $n = N-1, \ldots, 1, 0,\,k = 0,1,\dots,N/2-1$, solve random variables $X^{n+1}$, $Z^{n}$ and $Y^{n}$ by
\begin{align*}
X^{n+1} &= X^{n} + \psi(t_{n}, X^{n}, \Delta t, \xi_{n}^{n+1}), \\
\Delta tZ^{n} &= -\frac{1}{2}\Delta t\mathbb E_{t_{n}}^{X^{n}}\left[Z^{n+1} - Z^{n+1}[\sigma^{n+1}]^{-1}A^{n}\sigma^{n}\right] + \mathbb E_{t_{n}}^{X^{n}}\left[Y^{n+1}\Delta W_{t_{n+1}}^{\top}\right] \\
&\quad + \Delta t\mathbb E_{t_{n}}^{X^{n}}\left[f(t_{n+1}, X^{n+1}, Y^{n+1}, Z^{n+1})\Delta W_{t_{n+1}}^{\top}\right], \\
Y^{n} &= \mathbb E_{t_{n}}^{X^{n}}\left[Y^{n+1}\right] + \Delta tf_{1}(t_{n}, X^{n}, Y^{n}, Z^{n}) + \Delta t\mathbb E_{t_{n}}^{X^{n}}\left[f_{2}(t_{n+1}, X^{n+1}, Y^{n+1}, Z^{n+1})\right],\text{if } n=2k,\\
Y^{n} &= \mathbb E_{t_{n}}^{X^{n}}\left[Y^{n+1}\right] + \Delta tf_{2}(t_{n}, X^{n}, Y^{n}, Z^{n}) + \Delta t\mathbb E_{t_{n}}^{X^{n}}\left[f_{1}(t_{n+1}, X^{n+1}, Y^{n+1}, Z^{n+1})\right],\text{if } n=2k+1,
\end{align*}
where $\sigma^{n} = \sigma(t_{n}, X^{n})$, 
\(A^{n} = I_{d} + \partial_{x}b^{n}\Delta t + \sum_{j=1}^{d}\partial_{x}\sigma_{j}^{n}\Delta W_{t_{n+1}}^{j},I_{d}\) is the $d\times d$ identity matrix, $\sigma_{j}(\cdot)$ is the $j$-th column of the matrix $\sigma(\cdot)$,
and $[\sigma^{n+1}]^{-1}$ is the inverse matrix of $\sigma^{n+1}$.

\end{scheme}

\begin{scheme}\label{scheme2}
Given $Y^{N} = \Phi(X^{N})$ and $Z^{N} = \Phi_{x}(X^{N})\sigma(t_{N}, X^{N})$, for $n = N-1, \ldots, 1, 0,\,k = 0,1,\dots,N/2-1$, solve random variables $X^{n+1}$, $Z^{n}$ and $Y^{n}$ by
\begin{align*}
X^{n+1} &= X^{n} + \psi(t_{n}, X^{n}, \Delta t, \xi_{n}^{n+1}), \\
\Delta tZ^{n} &= \mathbb E_{t_{n}}^{X^{n}}\left[Z^{n+1}\left([\sigma^{n+1}]^{-1}\int_{t_{n}}^{t_{n+1}}D_{t}X^{n+1} \u d t - \Delta t\right)\right]  + \mathbb E_{t_{n}}^{X^{n}}\left[Y^{n+1}\Delta W_{t_{n+1}}^{\top}\right] \\
&\quad + \Delta t\mathbb E_{t_{n}}^{X^{n}}\left[f(t_{n+1}, X^{n+1}, Y^{n+1}, Z^{n+1})\Delta W_{t_{n+1}}^{\top}\right], \\
Y^{n} &= \mathbb E_{t_{n}}^{X^{n}}\left[Y^{n+1}\right] + \frac{1}{2}\Delta tf_{1}(t_{n}, X^{n}, Y^{n}, Z^{n})+ \frac{1}{2}\Delta t\mathbb E_{t_{n}}^{X^{n}}\left[f_{2}(t_{n+1}, X^{n+1}, Y^{n+1}, Z^{n+1})\right],\text{if } n=2k,\\
Y^{n} &= \mathbb E_{t_{n}}^{X^{n}}\left[Y^{n+1}\right] + \frac{1}{2}\Delta tf_{2}(t_{n}, X^{n}, Y^{n}, Z^{n})+ \frac{1}{2}\Delta t\mathbb E_{t_{n}}^{X^{n}}\left[f_{1}(t_{n+1}, X^{n+1}, Y^{n+1}, Z^{n+1})\right],\text{if } n=2k+1,
\end{align*}
where $\sigma^{n} = \sigma(t_{n}, X^{n})$, and $D_{t}$ is the Malliavin derivative.

\end{scheme}

\section{Error analysis}\label{sec3}
\subsection{Gneneral error estimate}

\begin{hypothesis}\label{H3.1}
The functions $|\partial_x b|$, $\sum_{j=1}^d |\partial_x \sigma_j|$, and $|(\sigma)^{-1}|$ are bounded. The functions $ f_{i}(t, x, y, z),\,i=1,2$ are uniformly Lipschitz continuous with respect to $(x, y, z) \in \mathbb{R}^d \times \mathbb{R}^k \times \mathbb{R}^{k \times d}$.
\end{hypothesis}

\begin{theorem}\label{Theorem3.1}
Let $\left(X_t^{t_n, X^n}, Y_t^{t_n, X^n}, Z_t^{t_n, X^n}\right)_{t_n \leq t \leq T}(n = 0, 1, \ldots, N)$ be the solutions to the FBSDEs \myeqref{eq2.2}  and $\left(X^n, Y^n, Z^n\right)(n = 0, 1, \ldots, N)$  be the solutions to Scheme 2.1(\(l=1\) in \myeqref{eq3.1}) or Scheme 2.2(\(l=2\) in \myeqref{eq3.1}). Under \sref{Hypothesis}{H2.1} and \sref{Hypothesis}{H2.2}, for sufficiently small time step $\Delta t$, we have 
\begin{align}\label{eq3.1}
& \left|Y_{t_n}^{t_n, X^n} - Y^n\right|^2 + \frac{1}{9} \Delta t \left|Z_{t_n}^{t_n, X^n} - Z^n\right|^2 \nonumber\\
& \leq C \mathbb{E}_{t_n}^{X^n} \left[  \left|Y_{t_{N-1}}^{t_{N-1}, X^{N-1}} - Y^{N-1}\right|^2 + \left|Y_{t_N}^{t_N, X^N} - Y^N\right|^2 +\frac{1}{9} \Delta t \left|Z_{t_N}^{t_N, X^N} - Z^N\right|^2 \right] \nonumber\\
&  \frac{2}{ \Delta t}\sum\limits_{i=n}^{N-2}\left\{ \mathbb{E}_{t_n}^{X^n} \left[\lvert R_Y^{i}+\tilde R_Y^{i+1}\rvert^2+\lvert\tilde R_Y^{i}+ R_Y^{i+1}\rvert^2\right]\right\}\nonumber\\	&	+  \frac{2}{\Delta t}\sum\limits_{i=n}^{N-1}\mathbb E_{t_{n}}^{X^{n}}\left[\lvert R_{Y1}^{i}\rvert^2+\lvert\tilde R_{Y1}^{i}\rvert^2+|R_{Z}^{i}|^{2}+|R_{Zl}^{i}|^{2}\right];
\end{align}

where $C>0$ is independent of $n$ and $ \Delta t,$ $R_Y^n$, $R_{Y1}^n$, $\tilde R_Y^n$, $\tilde R_{Y1}^n$,$R_Z^n$, $R_{Z1}^n$, and $R_{Z2}^n$ are defined in equations  \myeqref{Rny}, \myeqref{Rny1},\myeqref{tRny}, \myeqref{tRny1}, \myeqref{Rnz}, \myeqref{Rnz1}, and \myeqref{Rnz2}, respectively.
\end{theorem}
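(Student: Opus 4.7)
The plan is to derive a coupled one-step (then two-step) error recursion by subtracting each scheme from its reference equation, and then iterate backward in a way that makes the ``paired'' residuals $R_Y^i + \tilde R_Y^{i+1}$ and $\tilde R_Y^i + R_Y^{i+1}$ emerge naturally before invoking a discrete Gronwall argument.

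First, set $e_Y^n := Y_{t_n}^{t_n, X^n} - Y^n$ and $e_Z^n := Z_{t_n}^{t_n, X^n} - Z^n$. Subtracting Scheme 2.1 (or Scheme 2.2) from the reference identities \myeqref{evenyn}--\myeqref{oddyn} and \myeqref{zn1} (or \myeqref{zn2}) yields, for $n=2k$,
\begin{align*}
e_Y^n &= \mathbb{E}_{t_n}^{X^n}[e_Y^{n+1}] + c_1 \Delta t\, \delta f_1^{n} + c_2 \Delta t\, \mathbb{E}_{t_n}^{X^n}[\delta f_2^{n+1}] + R_Y^n + R_{Y1}^n,
\end{align*}
and for $n=2k+1$ the analogue with $f_1$ and $f_2$ swapped and $(\tilde R_Y^n, \tilde R_{Y1}^n)$ in place of $(R_Y^n, R_{Y1}^n)$; here $\delta f_i^{m}$ denotes the difference of $f_i$ evaluated at the true FBSDE solution and at the scheme iterates, which by Hypothesis 3.1 is controlled by $C(|e_Y^m| + |e_Z^m|)$, and $c_1,c_2$ are the scheme-specific weights ($1$ for Scheme 2.1, $1/2$ for Scheme 2.2). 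A parallel one-step identity holds for $\Delta t\, e_Z^n$ with truncation $2R_Z^n + R_{Zl}^n$ and an additional $\mathbb{E}_{t_n}^{X^n}[e_Y^{n+1}\Delta W_{t_{n+1}}^\top]$ term arising from the $Z$-scheme.

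Next, I would exploit the alternating structure by substituting the $(n{+}1)$-step $Y$-identity into the $n$-step one for $n$ even, yielding a two-step recursion whose residual is exactly the paired $R_Y^n + \mathbb{E}_{t_n}^{X^n}[\tilde R_Y^{n+1}]$ (modulo Lipschitz-bounded $f$-differences and flow-shift terms $R_{Y1}^n + \mathbb{E}_{t_n}^{X^n}[\tilde R_{Y1}^{n+1}]$). The symmetric computation starting at odd $n$ gives the companion pairing $\tilde R_Y^n + \mathbb{E}_{t_n}^{X^n}[R_Y^{n+1}]$, explaining why both $|R_Y^i + \tilde R_Y^{i+1}|^2$ and $|\tilde R_Y^i + R_Y^{i+1}|^2$ occur in the bound. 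Square both sides and take $\mathbb{E}_{t_n}^{X^n}[\cdot]$, then apply Young's inequality $|a+b|^2 \leq (1+\alpha \Delta t)|a|^2 + (1+\tfrac{1}{\alpha \Delta t})|b|^2$ together with Hypothesis 3.1 to absorb the $\delta f$-contributions. Handle the $Z$-recursion in parallel, using $\mathbb{E}_{t_n}^{X^n}[|\eta\Delta W_{t_{n+1}}|^2] \leq C\Delta t\,\mathbb{E}_{t_n}^{X^n}[|\eta|^2]$ to convert the $e_Y^{n+1}\Delta W^\top$ term into $\Delta t |e_Y^{n+1}|^2$; choosing the Young constants so that the implicit $\Delta t^2 |e_Z^n|^2$ term on the right can be absorbed into the left yields exactly the surviving coefficient $\tfrac{1}{9}\Delta t |e_Z^n|^2$ (the fraction comes from the $\{-\tfrac12, +2, +1\}$ coefficients in \myeqref{zn1}/\myeqref{zn2}). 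Setting $A^n := \mathbb{E}_{t_n}^{X^n}[|e_Y^n|^2 + \tfrac{1}{9}\Delta t|e_Z^n|^2]$, the combined inequality takes the form
\begin{equation*}
A^n \leq (1 + C \Delta t)\, A^{n+2} + \tfrac{C}{\Delta t}\mathbb{E}_{t_n}^{X^n}\bigl[|R_Y^n + \tilde R_Y^{n+1}|^2 + |R_{Y1}^n|^2 + |\tilde R_{Y1}^{n+1}|^2 + |R_Z^n|^2 + |R_{Zl}^n|^2 + \cdots\bigr].
\end{equation*}
A discrete Gronwall inequality applied to this two-step recursion, iterated from the terminal pair $(A^{N-1}, A^N)$ downward, gives the stated estimate; the occurrence of both $|e_Y^{N-1}|^2$ and $|e_Y^N|^2$ on the right is precisely the parity ambiguity of the pairing at the terminal end.

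The main obstacle will be Step 3, the careful bookkeeping when squaring the coupled $Y$/$Z$ error identities: one must simultaneously (i) absorb the implicit occurrences of $e_Y^n$ and $e_Z^n$ on the right-hand side into the left, (ii) control the cross-coupling between $e_Y$ and $e_Z$ coming from the $z$-dependence of $f$ and from the $\mathbb{E}[e_Y^{n+1}\Delta W^\top]$ term in the $Z$-scheme without letting the Lipschitz constants of $f_1, f_2$ blow up through the Young parameter $\alpha$, and (iii) tune $\alpha$ so that the precise constant $1/9$ in front of $\Delta t |e_Z^n|^2$ actually comes out of the algebra. A secondary technical subtlety is that $R_Y^n$ and $\tilde R_Y^n$ are defined (as per the remark after \myeqref{tRny1}) for \emph{every} $n$, so when pairing two consecutive steps one must verify that $R_Y^n$ with $n$ even and $\tilde R_Y^{n+1}$ with $n+1$ odd actually combine as a single second-order quadrature residual, which is the key structural cancellation underpinning the second-order convergence proved in the subsequent subsections.
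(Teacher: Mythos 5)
Your overall architecture matches the paper's proof: subtract the schemes from the reference identities \myeqref{evenyn}, \myeqref{oddyn}, \myeqref{zn1}/\myeqref{zn2}; chain an even step with the following odd step so that the $Y$-residuals appear only in the pairings $R_Y^n+\mathbb{E}_{t_n}^{X^n}[\tilde R_Y^{n+1}]$ and $\tilde R_Y^n+\mathbb{E}_{t_n}^{X^n}[R_Y^{n+1}]$; apply Young's inequality with parameter $\gamma\Delta t$; absorb the implicit occurrences of $e_y^n,e_z^n$; and close with a backward discrete Gronwall iteration (the paper obtains $\tfrac19=\tfrac16-\tfrac{C_1}{\gamma}$ with $\gamma=18C_1$, the $\tfrac16$ coming from the six-term expansion of the squared $Z$-identity, not directly from the $\{-\tfrac12,+2,+1\}$ weights).

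There is, however, one genuine gap, located exactly at the point you flag as obstacle (ii). You propose to control the stochastic-integral term in the $Z$-recursion by the crude bound $\bigl|\mathbb{E}_{t_n}^{X^n}[e_y^{n+1}\Delta W_{t_{n+1}}^\top]\bigr|^2\le C\Delta t\,\mathbb{E}_{t_n}^{X^n}[|e_y^{n+1}|^2]$. After dividing the squared $Z$-identity by $\Delta t$, this places a full $O(1)\cdot\mathbb{E}_{t_n}^{X^n}[|e_y^{n+1}|^2]$ on the right-hand side of the bound for $\Delta t|e_z^n|^2$; since the Young parameter $\gamma$ must remain $O(1)$ (otherwise the factor $1+\gamma\Delta t$ on the leading term destroys the Gronwall constant), the feedback of $e_z$ into $e_y$ then carries a fixed constant of order $C/\gamma$ per step rather than $O(\Delta t)$, and iterating over $N=T/\Delta t$ steps produces an amplification of order $e^{cN/\gamma}\to\infty$. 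The paper avoids this by centering first, $\mathbb{E}_{t_n}^{X^n}[e_y^{n+1}\Delta W_{t_{n+1}}^\top]=\mathbb{E}_{t_n}^{X^n}\bigl[(e_y^{n+1}-\mathbb{E}_{t_n}^{X^n}[e_y^{n+1}])\Delta W_{t_{n+1}}^\top\bigr]$, so that the bound becomes the conditional variance $\Delta t\bigl(\mathbb{E}_{t_n}^{X^n}[|e_y^{n+1}|^2]-|\mathbb{E}_{t_n}^{X^n}[e_y^{n+1}]|^2\bigr)$ as in \myeqref{eq3.10}; the negative term then cancels against the $(1+\gamma\Delta t)|\mathbb{E}[\cdot]|^2$ produced by the two-step $Y$-recursion at the matching level, leaving a net factor $1+O(\Delta t)$. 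Relatedly, your displayed recursion $A^n\le(1+C\Delta t)A^{n+2}+\rho_n$ does not close as written: the $O(1)\,\mathbb{E}_{t_n}^{X^n}[|e_y^{n+1}|^2]$ coming from the level-$n$ $Z$-bound (whose variance correction has nothing to cancel against at the starting level) forces one to propagate the coupled pair $|e_y^n|^2$ and $\mathbb{E}_{t_n}^{X^n}[|e_y^{n+1}|^2]$ together with the $Z$-terms, as in \myeqref{eq3.16}--\myeqref{eq3.21}; this coupling, rather than a mere ``parity ambiguity,'' is why both $|e_y^{N-1}|^2$ and $|e_y^N|^2$ survive at the terminal end.
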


\begin{proof} Let$\left(X^n, Y^n, Z^n\right)$ for $n = 0, 1, \ldots, N$ be the solutions to the Scheme 2.1. Without loss of generality, we assume that $n$ is even. 
Define 
\[
e_y^n = Y_{t_n}^{t_n, X^n} - Y^n, \quad e_z^n = Z_{t_n}^{t_n, X^n} - Z^n, e^n_{f} = f(t_n, X^n, Y_{t_n}^{t_n, X^n}, Z_{t_n}^{t_n, X^n}) - f(t_n, X^n, Y^n, Z^n),\]\[
e^n_{fi} = f_{i}(t_n, X^n, Y_{t_n}^{t_n, X^n}, Z_{t_n}^{t_n, X^n}) - f_{i}(t_n, X^n, Y^n, Z^n),i=1,2.
\]

By \myeqref{eq2.2} and Scheme 2.1,
\begin{equation}\label{eq3.2}
e_y^n = \mathbb{E}_{t_n}^{X^n} \left[e_y^{n+1}\right]+  \Delta t e^n_{f1}+ \Delta t \mathbb E_{t_n}^{X^n}[e_{f2}^{n+1}] + R_Y^{n}+ R_{Y1}^{n}    
\end{equation}
\begin{equation}\label{eq3.3}
e_y^{n+1} = \mathbb{E}_{t_{n+1}}^{X^{n+1}} \left[e_y^{n+2}\right]+  \Delta t e^{n+1}_{f2}+ \Delta t \mathbb E_{t_{n+1}}^{X^{n+1}}[e_{f1}^{n+2}] + \tilde R_Y^{n+1}+ \tilde R_{Y1}^{n+1}
\end{equation}
Substituting \myeqref{eq3.3} into \myeqref{eq3.2}, we have
\begin{equation}\label{eq3.4}
e_y^n = \mathbb{E}_{t_n}^{X^n} \left[e_y^{n+2}\right]+  \Delta t e^n_{f1}+ \Delta t \mathbb E_{t_n}^{X^n}[2e_{f2}^{n+1}+e_{f1}^{n+2}]+ R_Y^{n}+\mathbb{E}_{t_n}^{X^n} \left[\tilde R_Y^{n+1}\right]+ R_{Y1}^{n}+ \mathbb{E}_{t_n}^{X^n} \left[\tilde R_{Y1}^{n+1}\right]
\end{equation}
Taking $|\cdot|$ at the both sides of \myeqref{eq3.4}, By H3.1 we have
\begin{align}\label{eq3.5}
\lvert	e_y^n \rvert& \leq 	\lvert\mathbb{E}_{t_n}^{X^n} \left[e_y^{n+2}\right]\rvert+ L_{1} \Delta t (\lvert	e_y^n \rvert+\lvert	e_z^n \rvert)+2 L_{2}\Delta t \mathbb E_{t_n}^{X^n}\left[\lvert	e_y^{n+1} \rvert+\lvert	e_z^{n+1} \rvert\right] \nonumber\\	&+L_{1}\Delta t \mathbb{E}_{t_n}^{X^n} \left[\lvert	e_y^{n+2} \rvert+\lvert	e_z^{n+2} \rvert\right] 
+\lvert R_Y^{n}+\mathbb{E}_{t_n}^{X^n} \left[\tilde R_Y^{n+1}\right]\rvert+ \lvert R_{Y1}^{n}\rvert+\lvert \mathbb{E}_{t_n}^{X^n} \left[\tilde R_{Y1}^{n+1}\right]\rvert,
\end{align}
where $L_{1},L_{2}$ are Lipschitz constants of $f_{1},f_{2}$,respectively. By $(a+b)^2\leq (1+\gamma \Delta t)a^2+(1+\frac{1}{\gamma \Delta t})b^2,$for any $\gamma>0,$ we have
\begin{align}\label{eq3.6}
\lvert	e_y^n \rvert^2& \leq (1+\gamma \Delta t)	\lvert\mathbb{E}_{t_n}^{X^n} \left[e_y^{n+2}\right]\rvert^{2}\nonumber\\	& +C_1(1+\frac{1}{\gamma \Delta t})(\Delta t)^2 \left\{\lvert	e_y^n \rvert^2+\lvert	e_z^n \rvert^2+ \mathbb E_{t_n}^{X^n}\left[\lvert	e_y^{n+1} \rvert^2+\lvert	e_z^{n+1} \rvert^2+\lvert	e_y^{n+2} \rvert^2+\lvert	e_z^{n+2} \rvert^2\right]\right\} 
\nonumber\\	&+C_1(1+\frac{1}{\gamma \Delta t})\left\{\lvert R_Y^{n}+\mathbb{E}_{t_n}^{X^n} \left[\tilde R_Y^{n+1}\right]\rvert^2+ \lvert R_{Y1}^{n}\rvert^2+\lvert \mathbb{E}_{t_n}^{X^n} \left[\tilde R_{Y1}^{n+1}\right]\rvert^2\right\},
\end{align}
where $C_1>0$ depends only on $L_{1},L_{2}.$ Then we have
\begin{align}\label{eq3.7}
(1-\frac{C_1\Delta t}{\gamma })	\lvert	e_y^n \rvert^2& \leq (1+\gamma \Delta t)	\lvert\mathbb{E}_{t_n}^{X^n} \left[e_y^{n+2}\right]\rvert^{2}\nonumber\\	& +\frac{C_1\Delta t}{\gamma } \left\{\lvert	e_z^n \rvert^2+ \mathbb E_{t_n}^{X^n}\left[\lvert	e_y^{n+1} \rvert^2+\lvert	e_z^{n+1} \rvert^2+\lvert	e_y^{n+2} \rvert^2+\lvert	e_z^{n+2} \rvert^2\right]\right\} \nonumber\\	& +C_1(\Delta t)^2 \left\{\lvert	e_y^n \rvert^2+\lvert	e_z^n \rvert^2+ \mathbb E_{t_n}^{X^n}\left[\lvert	e_y^{n+1} \rvert^2+\lvert	e_z^{n+1} \rvert^2+\lvert	e_y^{n+2} \rvert^2+\lvert	e_z^{n+2} \rvert^2\right]\right\} 
\nonumber\\	&+C_1(1+\frac{1}{\gamma \Delta t})\left\{\lvert R_Y^{n}+\mathbb{E}_{t_n}^{X^n} \left[\tilde R_Y^{n+1}\right]\rvert^2+ \lvert R_{Y1}^{n}\rvert^2+\lvert \mathbb{E}_{t_n}^{X^n} \left[\tilde R_{Y1}^{n+1}\right]\rvert^2\right\}.
\end{align}
By \myeqref{zn1} and Scheme 2.1,
\begin{equation}\label{eq3.8}
\Delta te_z^n = -\frac{1}{2}\Delta t\mathbb{E}_{t_n}^{X^n} \left[e_z^{n+1}(I_{d}-[\sigma^{n+1}]^{-1}A^{n}\sigma^{n})\right]+ \mathbb E_{t_{n}}^{X^{n}}\left[e_{y}^{n+1}\Delta W_{t_{n+1}}^{\top}\right] +   \Delta t \mathbb E_{t_n}^{X^n}[e_{f}^{n+1}\Delta W_{t_{n+1}}^{\top}] + 2 R_Z^n +  R_{Z 1}^n.
\end{equation}
Taking$|\cdot|^2$, by\((a_{1}+\dots+a_{6})^2\le6(a_{1}^2+\dots+a_{6}^2) \), H\"older's inequalityand H3.1, we have
\begin{align}\label{eq3.9}
(\Delta t)^2\lvert	e_z^n \rvert^2	&\leq \frac{3}{2}(\Delta t)^2\lvert\mathbb{E}_{t_n}^{X^n} \left[e_z^{n+1}(I_{d}-[\sigma^{n+1}]^{-1}A^{n}\sigma^{n})\right] \rvert^2 +6\lvert\mathbb E_{t_{n}}^{X^{n}}\left[e_{y}^{n+1}\Delta W_{t_{n+1}}^{\top}\right]\rvert^2\nonumber\\	& +6	(\Delta t)^3L^{2}\mathbb E_{t_n}^{X^n}\left[\lvert	e_y^{n+1} \rvert^2+\lvert	e_z^{n+1} \rvert^2\right] +12\lvert R_Z^n \rvert^2+6\lvert R_{Z1}^n \rvert^2,
\end{align} 
where $L$ is Lipschitz constant of $f$. Since\[\mathbb E_{t_{n}}^{X^{n}}\left[e_{y}^{n+1}\Delta W_{t_{n+1}}^{\top}\right]=\mathbb E_{t_{n}}^{X^{n}}\left[(e_{y}^{n+1}-\mathbb E_{t_{n}}^{X^{n}}\left[e_{y}^{n+1}\right])\Delta W_{t_{n+1}}^{\top}\right], \]
By H\"older's inequality,
\begin{align}\label{eq3.10}
\lvert E_{t_{n}}^{X^{n}}\left[e_{y}^{n+1}\Delta W_{t_{n+1}}^{\top}\right] \rvert^2	\leq \Delta t \mathbb E_{t_{n}}^{X^{n}}\left[(e_{y}^{n+1}-\mathbb E_{t_{n}}^{X^{n}}\left[e_{y}^{n+1}\right])^2\right]\leq \Delta t (\mathbb E_{t_n}^{X^n}\left[\lvert	e_y^{n+1} \rvert^2\right] -\lvert\mathbb E_{t_{n}}^{X^{n}}\left[e_{y}^{n+1}\right]\rvert^2),
\end{align} 
Under the conditions of the theorem, we have
\begin{align*}
\mathbb E_{t_{n}}^{X^{n}}\left[|\sigma^{n+1}-\sigma^{n}|^{2}\right] 
&= \mathbb E_{t_{n}}^{X^{n}}\left[|\sigma(t_{n+1},X^{n+1})-\sigma(t_{n},X^{n})|^{2}\right] \nonumber\\
&\leqslant \mathbb E_{t_{n}}^{X^{n}}\left[L^{2}(\Delta t+|X^{n+1}-X^{n}|)^{2}\right] \nonumber\\
&\leqslant 2L^{2}(\Delta t)^{2}+2L^{2}\mathbb E_{t_{n}}^{X^{n}}\left[|\psi(t_{n},X^{n},\Delta t,\xi_{n}^{n+1})|^{2}\right]\nonumber \\
&\leqslant C\Delta t,
\end{align*}
where $L$ is the Lipschitz constant of $\sigma$, we deduce
\begin{align}\label{eq3.11}
&\mathbb E_{t_{n}}^{X^{n}}\left[|I_{d}-[\sigma^{n+1}]^{-1}A^{n}\sigma^{n}|^{2}\right] \nonumber\\
= &\mathbb E_{t_{n}}^{X^{n}}\left[\left|[\sigma^{n+1}]^{-1}(\sigma^{n+1}-\sigma^{n})-[\sigma^{n+1}]^{-1}\left(\partial_{x}b^{n}\Delta t+\sum_{j=1}^{d}\partial_{x}\sigma_{j}^{n}\Delta W_{t_{n+1}}^{j}\right)\sigma^{n}\right|^{2}\right] \nonumber\\
\leqslant &C\mathbb E_{t_{n}}^{X^{n}}\left[|\sigma^{n+1}-\sigma^{n}|^{2}\right]+C\Delta t\nonumber \\
\leqslant& C\Delta t.
\end{align}
By the inequalities \myeqref{eq3.9}, \myeqref{eq3.10}, and \myeqref{eq3.11}, we obtain
\begin{align}\label{eq3.12}
(\Delta t)^{2}|e_{z}^{n}|^{2} &\leqslant C_2(\Delta t)^{3}\mathbb E_{t_{n}}^{X^{n}}\left[|e_{z}^{n+1}|^{2}+|e_{y}^{n+1}|^{2}\right] \nonumber\\
&\quad + 6\Delta t\left(\mathbb E_{t_{n}}^{X^{n}}\left[|e_{y}^{n+1}|^{2}\right]-\left|\mathbb E_{t_{n}}^{X^{n}}\left[e_{y}^{n+1}\right]\right|^{2}\right)+ 12\left(|R_{z}^{n}|^{2}+|R_{Z1}^{n}|^{2}\right).
\end{align}
Dividing both sides of the inequality \myeqref{eq3.12} by $6\Delta t$ leads to
\begin{align}\label{eq3.13}
\frac{1}{6}\Delta t|e_{z}^{n}|^{2} &\leqslant C_2(\Delta t)^{2}\mathbb E_{t_{n}}^{X^{n}}\left[|e_{z}^{n+1}|^{2}+|e_{y}^{n+1}|^{2}\right] \nonumber\\
&+ \mathbb E_{t_{n}}^{X^{n}}\left[|e_{y}^{n+1}|^{2}\right]-\left|\mathbb E_{t_{n}}^{X^{n}}\left[e_{y}^{n+1}\right]\right|^{2}+ \frac{2}{\Delta t}\left(|R_{Z}^{n}|^{2}+|R_{Z1}^{n}|^{2}\right).
\end{align}
Similarily, 
\begin{align}\label{eq3.14}
\frac{1}{6}\Delta t\mathbb E_{t_{n}}^{X^{n}}\left[|e_{z}^{n+1}|^{2}\right] &\leqslant C_2(\Delta t)^{2}\mathbb E_{t_{n}}^{X^{n}}\left[|e_{z}^{n+2}|^{2}+|e_{y}^{n+2}|^{2}\right] \nonumber\\
&+ \mathbb E_{t_{n}}^{X^{n}}\left[|e_{y}^{n+2}|^{2}\right]-\left|\mathbb E_{t_{n}}^{X^{n}}\left[e_{y}^{n+2}\right]\right|^{2}+ \frac{2}{\Delta t}\mathbb E_{t_{n}}^{X^{n}}\left[|R_{Z}^{n+1}|^{2}+|R_{Z1}^{n+1}|^{2}\right].
\end{align} 
By \((1+\gamma \Delta t)	\lvert\mathbb{E}_{t_n}^{X^n} \left[e_y^{n+2}\right]\rvert^{2}+\mathbb E_{t_{n}}^{X^{n}}\left[|e_{y}^{n+2}|^{2}\right]-\left|\mathbb E_{t_{n}}^{X^{n}}\left[e_{y}^{n+2}\right]\right|^{2}\leq(1+\gamma \Delta t)	\mathbb{E}_{t_n}^{X^n} \left[\lvert e_y^{n+2}\rvert^{2}\right],\) we have

\begin{align}\label{eq3.15}
&	(1-\frac{C_1\Delta t}{\gamma })	\lvert	e_y^n \rvert^2+	(\frac{1}{6}-\frac{C_1}{\gamma })	\Delta t\lvert	e_z^n \rvert^2 +(\frac{1}{6}-\frac{C_1}{\gamma })\Delta t\mathbb E_{t_{n}}^{X^{n}}\left[|e_{z}^{n+1}|^{2}\right]\nonumber\\	 \leq&(1+\frac{C_1\Delta t}{\gamma })\mathbb E_{t_{n}}^{X^{n}}\left[|e_{y}^{n+1}|^{2}\right]+  (1+\gamma \Delta t+\frac{C_1\Delta t}{\gamma })\mathbb{E}_{t_n}^{X^n} \left[	\lvert e_y^{n+2}\rvert^{2}\right] +\frac{C_1\Delta t}{\gamma } \mathbb E_{t_n}^{X^n}\left[\lvert	e_z^{n+2} \rvert^2\right] \nonumber\\	 	 +&  (C_1+2C_2)(\Delta t)^2 \left\{\lvert	e_y^n \rvert^2+\lvert	e_z^n \rvert^2+ \mathbb E_{t_n}^{X^n}\left[\lvert	e_y^{n+1} \rvert^2+\lvert	e_z^{n+1} \rvert^2+\lvert	e_y^{n+2} \rvert^2+\lvert	e_z^{n+2} \rvert^2\right]\right\} 
\nonumber\\	+&C_1(1+\frac{1}{\gamma \Delta t})\left\{\lvert R_Y^{n}+\mathbb{E}_{t_n}^{X^n} \left[\tilde R_Y^{n+1}\right]\rvert^2+ \lvert R_{Y1}^{n}\rvert^2+ \lvert\mathbb{E}_{t_n}^{X^n} \left[\tilde R_{Y1}^{n+1}\right]\rvert^2\right\}\nonumber\\	+&  \frac{2}{\Delta t}\left(|R_{Z}^{n}|^{2}+|R_{Z1}^{n}|^{2}\right)+ \frac{2}{\Delta t}\mathbb E_{t_{n}}^{X^{n}}\left[|R_{Z}^{n+1}|^{2}+|R_{Z1}^{n+1}|^{2}\right].
\end{align}	
Consider
\begin{align}\label{eq3.16}
E_{t_n}^{X^n}\left[\lvert	e_y^{n+1} \rvert^2\right]& \leq (1+\gamma \Delta t)\mathbb{E}_{t_n}^{X^n} \left[	\lvert\mathbb{E}_{t_{n+1} }^{X^{n+1} } \left[e_y^{n+3}\right]\rvert^{2}\right]\nonumber\\	& +\frac{C_1\Delta t}{\gamma }  \mathbb E_{t_n}^{X^n}\left[\lvert	e_y^{n+1} \rvert^2+\lvert	e_z^{n+1} \rvert^2+\lvert	e_y^{n+2} \rvert^2+\lvert	e_z^{n+2} \rvert^2+\lvert	e_y^{n+3} \rvert^2+\lvert	e_z^{n+3} \rvert^2\right] \nonumber\\	& +C_1(\Delta t)^2 \mathbb E_{t_n}^{X^n}\left[\lvert	e_y^{n+1} \rvert^2+\lvert	e_z^{n+1} \rvert^2+\lvert	e_y^{n+2} \rvert^2+\lvert	e_z^{n+2} \rvert^2+\lvert	e_y^{n+3} \rvert^2+\lvert	e_z^{n+3} \rvert^2\right]
\nonumber\\	&+C_1(1+\frac{1}{\gamma \Delta t})\left\{ \mathbb{E}_{t_n}^{X^n} \left[\lvert\tilde R_Y^{n+1}+ R_Y^{n+2}\rvert^2\right]+ \mathbb{E}_{t_n}^{X^n} \left[\lvert\tilde R_{Y1}^{n+1}\lvert^2+ \lvert R_{Y1}^{n+2}\rvert^2\right]\right\},
\end{align}
\begin{align}\label{eq3.17}
\frac{1}{6}\Delta t\mathbb E_{t_{n}}^{X^{n}}\left[E_{t_{n+1}}^{X^{n+1}}\left[|e_{z}^{n+2}|^{2}\right]\right] &\leqslant C_2(\Delta t)^{2}\mathbb E_{t_{n}}^{X^{n}}\left[|e_{z}^{n+3}|^{2}+|e_{y}^{n+3}|^{2}\right] \nonumber\\
&+ \mathbb E_{t_{n}}^{X^{n}}\left[|e_{y}^{n+3}|^{2}\right]-E_{t_{n}}^{X^{n}}\left[\left|\mathbb E_{t_{n+1}}^{X^{n+1}}\left[e_{y}^{n+3}\right]\right|^{2}\right]+ \frac{2}{\Delta t}\mathbb E_{t_{n}}^{X^{n}}\left[|R_{Z}^{n+2}|^{2}+|R_{Z1}^{n+2}|^{2}\right].
\end{align} 
Plus \myeqref{eq3.16} and \myeqref{eq3.17} into \myeqref{eq3.15}, we have
\begin{align}\label{eq3.18}
&	(1-\frac{C_1\Delta t}{\gamma })	\lvert	e_y^n \rvert^2+	(\frac{1}{6}-\frac{C_1}{\gamma })	\Delta t\lvert	e_z^n \rvert^2 +(\frac{1}{6}-\frac{2C_1}{\gamma })\Delta t\mathbb E_{t_{n}}^{X^{n}}\left[|e_{z}^{n+1}|^{2}\right]\nonumber\\	 +&	E_{t_n}^{X^n}\left[\lvert	e_y^{n+1} \rvert^2\right]+(\frac{1}{6}-\frac{2C_1}{\gamma })\Delta t\mathbb E_{t_{n}}^{X^{n}}\left[|e_{z}^{n+2}|^{2}\right]\nonumber\\	 \leq&  (1+\gamma \Delta t+\frac{2C_1\Delta t}{\gamma })\mathbb{E}_{t_n}^{X^n} \left[	\lvert e_y^{n+2}\rvert^{2}\right] +(1+\gamma \Delta t+\frac{C_1\Delta t}{\gamma } ) \mathbb E_{t_{n}}^{X^{n}}\left[|e_{y}^{n+3}|^{2}\right] +\frac{C_1\Delta t}{\gamma }  \mathbb E_{t_n}^{X^n}\left[\lvert	e_z^{n+3} \rvert^2\right] \nonumber\\	+&(1+\frac{2C_1\Delta t}{\gamma })\mathbb E_{t_{n}}^{X^{n}}\left[|e_{y}^{n+1}|^{2}\right]\nonumber\\
+&2(C_1+C_2)(\Delta t)^2 \left\{\lvert	e_y^n \rvert^2+\lvert	e_z^n \rvert^2+ \mathbb E_{t_n}^{X^n}\left[\lvert	e_y^{n+1} \rvert^2+\lvert	e_z^{n+1} \rvert^2+\lvert	e_y^{n+2} \rvert^2+\lvert	e_z^{n+2} \rvert^2+|e_{z}^{n+3}|^{2}+|e_{y}^{n+3}|^{2}\right]\right\} 
\nonumber\\	+&C_1(1+\frac{1}{\gamma \Delta t})\left\{\lvert R_Y^{n}+\mathbb{E}_{t_n}^{X^n} \left[\tilde R_Y^{n+1}\right]\rvert^2+ \lvert R_{Y1}^{n}\rvert^2+ \lvert\mathbb{E}_{t_n}^{X^n} \left[\tilde R_{Y1}^{n+1}\right]\rvert^2\right\}\nonumber\\	+&C_1(1+\frac{1}{\gamma \Delta t})\left\{ \mathbb{E}_{t_n}^{X^n} \left[\lvert\tilde R_Y^{n+1}+ R_Y^{n+2}\rvert^2\right]+ \mathbb{E}_{t_n}^{X^n} \left[\lvert\tilde R_{Y1}^{n+1}\rvert^2+ \lvert R_{Y1}^{n+2}\rvert^2\right]\right\} \nonumber\\	+& \frac{2}{\Delta t}\left(|R_{Z}^{n}|^{2}+|R_{Z1}^{n}|^{2}\right)+ \frac{2}{\Delta t}\mathbb E_{t_{n}}^{X^{n}}\left[|R_{Z}^{n+1}|^{2}+|R_{Z1}^{n+1}|^{2}+|R_{Z}^{n+2}|^{2}+|R_{Z1}^{n+2}|^{2}\right].
\end{align}
Note that \begin{align}\label{eq3.19}
\Delta tE_{t_n}^{X^n}\left[\lvert	e_y^{n+1} \rvert^2\right]& \leq 	\Delta t\mathbb{E}_{t_n}^{X^n} \left[	\lvert e_y^{n+3}\rvert^{2}\right]\nonumber\\	& +(\gamma+\frac{C_1}{\gamma })(\Delta t)^2 \mathbb E_{t_n}^{X^n}\left[\lvert	e_y^{n+1} \rvert^2+\lvert	e_z^{n+1} \rvert^2+\lvert	e_y^{n+2} \rvert^2+\lvert	e_z^{n+2} \rvert^2+\lvert	e_y^{n+3} \rvert^2+\lvert	e_z^{n+3} \rvert^2\right]
\nonumber\\	&+\frac{C_1}{\gamma }\left\{ \mathbb{E}_{t_n}^{X^n} \left[\lvert\tilde R_Y^{n+1}+ R_Y^{n+2}\rvert^2\right]+ \mathbb{E}_{t_n}^{X^n} \left[\lvert\tilde R_{Y1}^{n+1}\rvert^2+ \lvert R_{Y1}^{n+2}\rvert^2\right]\right\},
\end{align}
Substituting into \myeqref{eq3.18}, we have
\begin{align}\label{eq3.20}
&	(1-\frac{C_1\Delta t}{\gamma })	\lvert	e_y^n \rvert^2+	(\frac{1}{6}-\frac{C_1}{\gamma })	\Delta t\lvert	e_z^n \rvert^2 +(\frac{1}{6}-\frac{2C_1}{\gamma })\Delta t\mathbb E_{t_{n}}^{X^{n}}\left[|e_{z}^{n+1}|^{2}\right] +	(\frac{1}{6}-\frac{2C_1}{\gamma })\Delta t\mathbb E_{t_{n}}^{X^{n}}\left[|e_{z}^{n+2}|^{2}\right]\nonumber\\	 \leq&  (1+\gamma \Delta t+\frac{2C_1\Delta t}{\gamma })\mathbb{E}_{t_n}^{X^n} \left[	\lvert e_y^{n+2}\rvert^{2}\right] +(1+\gamma \Delta t+\frac{3C_1\Delta t}{\gamma } ) \mathbb E_{t_{n}}^{X^{n}}\left[|e_{y}^{n+3}|^{2}\right] +\frac{C_1\Delta t}{\gamma }  \mathbb E_{t_n}^{X^n}\left[\lvert	e_z^{n+3} \rvert^2\right] 
\nonumber\\	+&\frac{C_1}{\gamma \Delta t}\left\{\lvert R_Y^{n}+\mathbb{E}_{t_n}^{X^n} \left[\tilde R_Y^{n+1}\right]\rvert^2+ \lvert R_{Y1}^{n}\rvert^2+\lvert \mathbb{E}_{t_n}^{X^n} \left[\tilde R_{Y1}^{n+1}\right]\rvert^2\right\}\nonumber\\	+&\frac{C_1}{\gamma \Delta t}\left\{ \mathbb{E}_{t_n}^{X^n} \left[\lvert\tilde R_Y^{n+1}+ R_Y^{n+2}\rvert^2\right]+ \mathbb{E}_{t_n}^{X^n} \left[\lvert\tilde R_{Y1}^{n+1}\rvert^2+ \lvert R_{Y1}^{n+2}\rvert^2\right]\right\} \nonumber\\	+& \frac{2}{\Delta t}\left(|R_{Z}^{n}|^{2}+|R_{Z1}^{n}|^{2}\right)+ \frac{2}{\Delta t}\mathbb E_{t_{n}}^{X^{n}}\left[|R_{Z}^{n+1}|^{2}+|R_{Z1}^{n+1}|^{2}+|R_{Z}^{n+2}|^{2}+|R_{Z1}^{n+2}|^{2}\right]+O((\Delta t)^2).
\end{align}
For any \(1 \leq k \leq N-n-1,\) by induction,
\begin{align}\label{eq3.21}
&	(1-\frac{C_1\Delta t}{\gamma })	\lvert	e_y^n \rvert^2+	(\frac{1}{6}-\frac{C_1}{\gamma })	\Delta t\lvert	e_z^n \rvert^2 +(\frac{1}{6}-\frac{2C_1}{\gamma })\Delta t\mathbb E_{t_{n}}^{X^{n}}\left[|e_{z}^{n+1}|^{2}+|e_{z}^{n+k}|^{2}\right] 
\nonumber\\	+&  (\frac{1}{6}-\frac{3C_1}{\gamma })\Delta t\sum\limits_{i=2}^{k-1}\mathbb E_{t_{n}}^{X^{n}}\left[|e_{z}^{n+i}|^{2}\right]
\nonumber\\	 \leq&  (1+k\gamma \Delta t+\frac{3kC_1\Delta t}{\gamma } ) \mathbb E_{t_{n}}^{X^{n}}\left[	\lvert e_y^{n+k}\rvert^{2}+|e_{y}^{n+k+1}|^{2}\right] +\frac{C_1\Delta t}{\gamma }  \mathbb E_{t_n}^{X^n}\left[\lvert	e_z^{n+k+1} \rvert^2\right] 
\nonumber\\		+&\frac{C_1}{\gamma \Delta t}\sum\limits_{i=0}^{k-1}\left\{ \mathbb{E}_{t_n}^{X^n} \left[\lvert R_Y^{n+i}+\tilde R_Y^{n+i+1}\rvert^2+\lvert\tilde R_Y^{n+i}+ R_Y^{n+i+1}\rvert^2\right] \right\}
\nonumber\\		+&\frac{C_1}{\gamma \Delta t}\sum\limits_{i=0}^{k}\left\{ \mathbb{E}_{t_n}^{X^n} \left[\lvert R_{Y1}^{n+i}\rvert^2+ \lvert\tilde R_{Y1}^{n+i}\rvert^2\right]\right\} + \frac{2}{\Delta t}\sum\limits_{i=0}^{k}\mathbb E_{t_{n}}^{X^{n}}\left[|R_{Z}^{n+i}|^{2}+|R_{Z1}^{n+i}|^{2}\right]+O((\Delta t)^2),
\end{align}

Set \(k:=N-n-1,\,\gamma: = 18C_1,C:=1+18C_{1}T+\frac{T}{6}+\frac{\Delta t}{18}\), by using\(\frac{1}{1-\frac{\Delta t}{18}}=1+\frac{\Delta t}{18}+O((\Delta t)^2 ),\) we have
\begin{align}\label{eq3.22}
&	\lvert	e_y^n \rvert^2+	\frac{1}{9}	\Delta t\lvert	e_z^n \rvert^2 \nonumber\\	 \leq&  C\mathbb{E}_{t_n}^{X^n} \left[	\lvert e_y^{N-1}\rvert^{2}+\lvert e_y^{N}\rvert^{2}\right] +\frac{\Delta t}{18} \mathbb E_{t_n}^{X^n}\left[\lvert	e_z^{N} \rvert^2\right] \nonumber\\	 +&\frac{2}{ \Delta t}\sum\limits_{i=n}^{N-2}\left\{ \mathbb{E}_{t_n}^{X^n} \left[\lvert R_Y^{i}+\tilde R_Y^{i+1}\rvert^2+\lvert\tilde R_Y^{i}+ R_Y^{i+1}\rvert^2\right]\right\} \nonumber\\		+&  \frac{2}{\Delta t}\sum\limits_{i=n}^{N-1}\mathbb E_{t_{n}}^{X^{n}}\left[\lvert R_{Y1}^{i}\rvert^2+\lvert\tilde R_{Y1}^{i}\rvert^2+|R_{Z}^{i}|^{2}+|R_{Z1}^{i}|^{2}\right]+O((\Delta t)^2).
\end{align}
From \citep{ZhaoLi2014} and \myeqref{zn2}, it can be seen that the error estimation of Scheme 2.2 proves to be similar. The proof is completed.

\end{proof}

\subsection{Error estimate}

\begin{theorem}\label{Theorem3.2}
Let $\left(X_t^{t_n, X^n}, Y_t^{t_n, X^n}, Z_t^{t_n, X^n}\right)_{t_n \leq t \leq T}(n = 0, 1, \ldots, N)$ be the solutions to the FBSDEs \myeqref{eq2.2}  and $\left(X^n, Y^n, Z^n\right)(n = 0, 1, \ldots, N)$  be the solutions to Scheme 2.1 or Scheme 2.2 with $Y^{N} = \Phi(X^{N})$ and $Z^{N} = \Phi_{x}(X^{N})\sigma(t_{N}, X^{N})$. 
\noindent	(1.) Assume \( \Phi \in C_{b}^{4,\alpha}(\alpha \in (0,1]),f \in  C_{b}^{2,4,4,4}\,b,\,\sigma \in C_{b}^{2,4},\) and the matrix \(\left|\sigma^{-1}\right|\) is bounded. Then under \sref{Hypothesis}{H2.1} and \sref{Hypothesis}{H2.2}, we have:
\begin{equation}\label{eq3.23}
\max\limits_{0\leq n \leq N} \left(\left|Y_{t_n}^{t_n, X^n} - Y^n\right|^2 + \Delta t \left|Z_{t_n}^{t_n, X^n} - Z^n\right|^2 \right) \leq C (\Delta t)^{\min\{2,2\gamma\}}.
\end{equation}
\noindent (2.)Assume \sref{Hypothesis}{H2.2} holds, 
\( \Phi \in C_{b}^{6,\alpha}(\alpha \in (0,1]),\,f_{1},\,f_{2} \in  C_{b}^{3,6,6,6},\,b,\,\sigma \in C_{b}^{3,6},\) and the matrix \(\left|\sigma^{-1}\right|\) is bounded. Then for \( n = 0, 1, \ldots, N\), we have the estimates
\begin{equation}\label{eq3.24}
\max\limits_{0\leq n \leq N} \left(\left|Y_{t_n}^{t_n, X^n} - Y^n\right|^2 + \Delta t \left|Z_{t_n}^{t_n, X^n} - Z^n\right|^2 \right) \leq C (\Delta t)^{\min\{4,2\gamma\}}.
\end{equation}
where $C>0$ is independent of $\Delta t$.
\end{theorem}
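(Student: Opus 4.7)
The plan is to combine the abstract error recursion of Theorem \ref{Theorem3.1} with concrete estimates of each residual $R_Y^n$, $\tilde R_Y^n$, $R_{Y1}^n$, $\tilde R_{Y1}^n$, $R_Z^n$, $R_{Z1}^n$, $R_{Z2}^n$ derived by Taylor expansion under the stated smoothness. Because $Y^N=\Phi(X^N)$ and $Z^N=\Phi_x(X^N)\sigma(t_N,X^N)$ match the exact terminal values, the boundary contribution in \eqref{eq3.1} vanishes; the start-up term $|Y_{t_{N-1}}^{t_{N-1},X^{N-1}}-Y^{N-1}|^2$ will be handled by a one-step local-truncation argument and absorbed into the same order as the generic residuals.

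For part (1), I would first invoke Schauder theory for the semilinear PDE \eqref{eq2}: the hypotheses $\Phi\in C_b^{4,\alpha}$, $f\in C_b^{2,4,4,4}$, $b,\sigma\in C_b^{2,4}$ give $u\in C_b^{2,4}$, so the maps $s\mapsto F_i(s):=\mathbb{E}_{t_n}^{X^n}[f_{i,s}^{t_n,X^n}]$ are $C^2$ with uniformly bounded derivatives. A second-order Taylor expansion applied in \eqref{Rny} and \eqref{tRny} will give $|R_Y^n|,|\tilde R_Y^n|\leq C(\Delta t)^2$ and hence $\mathbb{E}_{t_n}^{X^n}[|R_Y^n+\tilde R_Y^{n+1}|^2]\leq C(\Delta t)^4$. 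The residuals $R_{Y1}^n$, $\tilde R_{Y1}^n$ quantify the SDE weak error through the smooth dependence of $Y_{t_{n+1}}^{t,x}$ on $(t,x)$, and by \sref{Hypothesis}{H2.2} satisfy $\mathbb{E}_{t_n}^{X^n}[|R_{Y1}^n|^2+|\tilde R_{Y1}^n|^2]\leq C(\Delta t)^{2\min\{2,\gamma+1\}+1}$. The $Z$-residuals $R_Z^n$, $R_{Z1}^n$, $R_{Z2}^n$ coincide with those in \citep{ZhaoLi2014}, and I would import $\mathbb{E}_{t_n}^{X^n}[|R_Z^n|^2+|R_{Zl}^n|^2]\leq C(\Delta t)^{2\min\{2,\gamma+1\}+1}$ directly. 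Summing $N=T/\Delta t$ indices with the $1/\Delta t$ prefactor from Theorem \ref{Theorem3.1} then delivers \eqref{eq3.23}.

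For part (2), I would exploit the Strang-type cancellation inherent in the alternating splitting. The enhanced smoothness $\Phi\in C_b^{6,\alpha}$, $f_1,f_2\in C_b^{3,6,6,6}$, $b,\sigma\in C_b^{3,6}$ lifts $u$ to $C_b^{3,6}$ and permits a third-order Taylor expansion. Setting $G_i(t,x,s):=\mathbb{E}[f_i(s,X_s^{t,x},Y_s^{t,x},Z_s^{t,x})]$, the Markov flow identity $G_i(t_n,X^n,s)=\mathbb{E}_{t_n}^{X^n}[G_i(t_{n+1},X_{t_{n+1}}^{t_n,X^n},s)]$ lets me transport $\tilde R_Y^{n+1}$ from its native base $(t_{n+1},X^{n+1})$ onto the common base $(t_n,X^n)$, and the discrepancy between $X^{n+1}$ and $X_{t_{n+1}}^{t_n,X^n}$ is absorbed by \sref{Hypothesis}{H2.2}. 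After re-basing, the composite quadrature on the double step $[t_n,t_{n+2}]$ reduces to a trapezoidal rule for $F_1$ plus a midpoint rule for $F_2$, each a second-order scheme with local error $O((\Delta t)^3)$. Combined with the SDE contribution $O((\Delta t)^{\gamma+1})$, this yields $\mathbb{E}_{t_n}^{X^n}[|R_Y^n+\tilde R_Y^{n+1}|^2]\leq C(\Delta t)^{2\min\{3,\gamma+1\}}$ and analogously for $|\tilde R_Y^i+R_Y^{i+1}|^2$. Under the same regularity the residuals $R_{Y1}^n$, $\tilde R_{Y1}^n$, $R_Z^n$, $R_{Zl}^n$ each gain one order in $L^2$, and the same summation produces \eqref{eq3.24}.

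The hard part will be the re-basing step for $R_Y^n+\tilde R_Y^{n+1}$: the two summands live on different filtrations and involve conditional expectations anchored at distinct base points, so the Strang cancellation only becomes visible after a careful third-order expansion of $G_i$ in $(t,x)$ about $(t_n,X^n)$ and the absorption of the martingale fluctuations of $X^{n+1}-X^n$ into the weak-error estimates of \sref{Hypothesis}{H2.2}. Matching the forward-base representation in $R_Y^n$ with the backward-base representation in $\tilde R_Y^{n+1}$ of the same continuous-time functional is precisely where the higher regularity of part (2) is genuinely required, and it is this step that upgrades the first-order bound \eqref{eq3.23} to the second-order rate \eqref{eq3.24}.
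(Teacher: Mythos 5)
Your proposal is correct and follows essentially the same route as the paper: invoke Theorem \ref{Theorem3.1}, import the $R_{Y1}$, $\tilde R_{Y1}$, $R_Z$, $R_{Zl}$ bounds from \citep{ZhaoLi2014} (Lemma \ref{lemma3.1}), prove the pairwise cancellation $|R_Y^n+\mathbb{E}_{t_n}^{X^n}[\tilde R_Y^{n+1}]|=O((\Delta t)^{\min\{3,\gamma+1\}})$ as a separate lemma (Lemma \ref{lemma3.2}), and close with a one-step bound on the start-up error $|e_y^{N-1}|^2$. Your reading of the double step as a composite trapezoidal rule for $f_1$ plus a midpoint rule for $f_2$ is an equivalent reformulation of the paper's decomposition into a per-step trapezoidal error $I$ plus a telescoping correction in $g=\tfrac12(f_2-f_1)$, and the re-basing difficulty you single out is exactly what the paper handles via \sref{Hypothesis}{H2.2} in \eqref{eq3.27}.
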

To prove the above theorem, we introduce \sref{Lemma}{lemma3.1}.

\begin{lemma}\label{lemma3.1}(See \citep{ZhaoLi2014}).  Let \(  R^n_{Y1},\,\tilde R^n_{Y1},\,R^n_{Z},\,R^n_{Z1},\,R^n_{Z2}\) be the truncation errors defined in \myeqref{Rny1}, \myeqref{tRny1}, \myeqref{Rnz}, \myeqref{Rnz1}, and \myeqref{Rnz2}, respectively. 

\noindent	(1.) Assume \sref{Hypothesis}{H2.2} holds, \( \Phi \in C_{b}^{4,\alpha}(\alpha \in (0,1]),\,f_{1},\,f_{2} \in  C_{b}^{2,4,4,4},\,b,\,\sigma \in C_{b}^{2,4},\) and the matrix \(\left|\sigma^{-1}\right|\) is bounded. Then for \( n = 0, 1, \ldots, N\), we have the estimates
\begin{align*}
&	\left|R^{n}_{Y}\right| \leq C (\Delta t)^{2},\,\left|R^{n}_{Y1}\right| \leq C (\Delta t)^{2},\quad  \left|\tilde R^{n}_{Y1}\right|  \leq C (\Delta t)^{2},\nonumber\\ & \left|R^{n}_{Z}\right|  \leq C (\Delta t)^{\min\{2,\gamma+2\}},\,\left|R^{n}_{Z1}\right|  \leq C (\Delta t)^{\min\{2,\gamma+1\}},\,\left|R^{n}_{Z2}\right|  \leq C (\Delta t)^{\min\{2,\gamma+1\}}.
\end{align*} 

\noindent (2.)Assume \sref{Hypothesis}{H2.2} holds, 
\( \Phi \in C_{b}^{6,\alpha}(\alpha \in (0,1]),\,f_{1},\,f_{2} \in  C_{b}^{3,6,6,6},\,b,\,\sigma \in C_{b}^{3,6},\) and the matrix \(\left|\sigma^{-1}\right|\) is bounded. Then for \( n = 0, 1, \ldots, N\), we have the estimates
\begin{align*}
&\left|R^{n}_{Y}\right| \leq C (\Delta t)^{2},\,\left|R^{n}_{Y1}\right| \leq C (\Delta t)^{3},\quad  \left|\tilde R^{n}_{Y1}\right|  \leq C (\Delta t)^{3},\nonumber\\ & \left|R^{n}_{Z}\right|  \leq C (\Delta t)^{\min\{3,\gamma+2\}},\,\left|R^{n}_{Z1}\right| \leq C (\Delta t)^{\min\{3,\gamma+1\}},\,\left|R^{n}_{Z2}\right|  \leq C (\Delta t)^{\min\{3,\gamma+1\}}.
\end{align*}  
\end{lemma}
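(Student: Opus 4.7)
The plan is to organize the six truncation errors into three groups that reflect their distinct origins: (A) the $Z$-process residuals $R_Z^n, R_{Z1}^n, R_{Z2}^n$, which are identical in form to those in \citep{ZhaoLi2014}; (B) the quadrature residual $R_Y^n$ (and symmetrically $\tilde R_Y^n$), which comes from the one-sided explicit/implicit alternation applied to $f_1$ and $f_2$; and (C) the base-point transport residuals $R_{Y1}^n, \tilde R_{Y1}^n$, which measure the weak error incurred by shifting the anchor of the reference FBSDE from $(t_n,X^n)$ to $(t_{n+1},X^{n+1})$. Each group is then reduced to a single analytical tool.

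For group (A), I would invoke the estimates proved in \citep{ZhaoLi2014} directly. Because Scheme \ref{scheme1} uses exactly their equation (2.19) for $Z$ and Scheme \ref{scheme2} uses exactly their (2.24), the objects $R_Z^n, R_{Z1}^n, R_{Z2}^n$ defined in \myeqref{Rnz}--\myeqref{Rnz2} coincide word-for-word with those in that paper. Consequently the two-part bounds stated here are a verbatim transcription of their corresponding truncation-error lemma, and no new argument is required.

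For group (B), define $\phi_j(s) := \mathbb E_{t_n}^{X^n}[f_j(s, X_s^{t_n,X^n}, Y_s^{t_n,X^n}, Z_s^{t_n,X^n})]$ for $j=1,2$. By the nonlinear Feynman--Kac formula, $Y_s^{t_n,X^n} = u(s,X_s^{t_n,X^n})$ and $Z_s^{t_n,X^n} = (u_x\sigma)(s,X_s^{t_n,X^n})$, so $\phi_j(s) = \mathbb E_{t_n}^{X^n}[g_j(s,X_s^{t_n,X^n})]$ with $g_j(s,x) := f_j(s,x,u(s,x),(u_x\sigma)(s,x))$. Under the smoothness hypotheses, $g_j$ lies in $C_b^{2,4}$ (Part 1) or $C_b^{3,6}$ (Part 2), so It\^o's formula applied to $g_j(\cdot,X_\cdot^{t_n,X^n})$ followed by taking conditional expectation gives $|\phi_j(s) - \phi_j(t_*)| \leq C|s-t_*|$ for either endpoint $t_*\in\{t_n,t_{n+1}\}$. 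Since
\[
R_Y^n = \int_{t_n}^{t_{n+1}} \bigl\{[\phi_1(s) - \phi_1(t_n)] + [\phi_2(s) - \phi_2(t_{n+1})]\bigr\}\,ds,
\]
a single integration delivers $|R_Y^n|\leq C(\Delta t)^2$; $\tilde R_Y^n$ is estimated identically after interchanging the roles of $f_1$ and $f_2$. I would emphasize that this second-order bound does \emph{not} improve under the Part (2) smoothness---a one-sided quadrature is intrinsically $O((\Delta t)^2)$---and the enhanced global rate in Theorem \ref{Theorem3.2} is recovered from cancellation in the sum $R_Y^i + \tilde R_Y^{i+1}$ (which is why the stability estimate \myeqref{eq3.1} collects the two inside a single squared norm), not from any improvement in the individual pointwise bounds.

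For group (C), the Markov property and Feynman--Kac yield
\[
Y_{t_{n+1}}^{t_n,X^n} - Y_{t_{n+1}}^{t_{n+1},X^{n+1}} = u(t_{n+1},X_{t_{n+1}}^{t_n,X^n}) - u(t_{n+1},X^{n+1}),
\]
and likewise $f_{2,t_{n+1}}^{t_n,X^n} - f_{2,t_{n+1}}^{t_{n+1},X^{n+1}} = G(X_{t_{n+1}}^{t_n,X^n}) - G(X^{n+1})$ with $G(x) := f_2(t_{n+1},x,u(t_{n+1},x),(u_x\sigma)(t_{n+1},x))$. Both $u(t_{n+1},\cdot)$ and $G$ lie in the class $C_b^{2\beta+2}$ demanded by \sref{Hypothesis}{H2.2}, so each conditional expectation is bounded by $C(\Delta t)^{\beta+1}$; the term multiplied by $\Delta t$ contributes at the order $(\Delta t)^{\beta+2}$ and is absorbed. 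The Part (1) bound $|R_{Y1}^n|\leq C(\Delta t)^2$ then corresponds to an SDE scheme with $\beta\geq 1$ (Euler suffices under $b,\sigma\in C_b^{2,4}$), while Part (2) requires $\beta\geq 2$ (met by the order-2 weak It\^o--Taylor scheme under $b,\sigma\in C_b^{3,6}$); $\tilde R_{Y1}^n$ is handled symmetrically by swapping $f_1$ and $f_2$. The main obstacle throughout, in my view, is checking that $u$, $u_x\sigma$, and hence $g_j,G$ inherit precisely the regularity advertised by the hypotheses, so that It\^o's formula and the $g\in C_b^{2\beta+2}$ requirement of \sref{Hypothesis}{H2.2} can legitimately be invoked with the stated constants. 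This rests on classical interior regularity for the quasilinear parabolic PDE \myeqref{eq2} and is the one bookkeeping step that must be carried carefully to land at the precise powers stated in each part of the lemma.
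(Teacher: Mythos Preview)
Your proposal is correct and in fact goes beyond what the paper itself supplies: the paper gives no proof of this lemma at all, simply writing ``(See \citep{ZhaoLi2014})'' and moving on. Your three-group decomposition accurately reconstructs the intended argument---Group~(A) is indeed verbatim from the cited reference, while Groups~(B) and~(C) require the small adaptations you describe because $R_Y^n$, $R_{Y1}^n$, $\tilde R_{Y1}^n$ involve the new $f_1/f_2$ splitting not present in \citep{ZhaoLi2014}. Your observation that the individual $|R_Y^n|$ bound cannot be improved past $(\Delta t)^2$ and that the second-order global rate comes from the pairing $R_Y^i+\tilde R_Y^{i+1}$ is also exactly the point the paper exploits in Lemma~\ref{lemma3.2} and the stability estimate~\myeqref{eq3.1}.
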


\begin{lemma}\label{lemma3.2} Let \(  R^n_{Y},\,\tilde R^n_{Y}\) be the truncation errors defined in \myeqref{Rny} and \myeqref{tRny}, respectively. 

\noindent	(1.)Assume \sref{Hypothesis}{H2.2} holds, \( \Phi \in C_{b}^{4,\alpha}(\alpha \in (0,1]),\,f_{1},\,f_{2} \in  C_{b}^{2,4,4,4},\,b,\,\sigma \in C_{b}^{2,4},\) and the matrix \(\left|\sigma^{-1}\right|\) is bounded. Then for \( n = 0, 1, \ldots, N\), we have the estimates
\begin{equation*}
\left|R^{n}_{Y}+\mathbb E_{t_n}^{X^n}[\tilde R^{n+1}_{Y}]\right|  \leq C (\Delta t)^{2}, 	\left|\tilde R^{n}_{Y}+\mathbb E_{t_n}^{X^n}[ R^{n+1}_{Y}]\right|  \leq C (\Delta t)^{2}.
\end{equation*} 

\noindent (2.)Assume \sref{Hypothesis}{H2.2} holds, 
\( \Phi \in C_{b}^{6,\alpha}(\alpha \in (0,1]),\,f_{1},\,f_{2} \in  C_{b}^{3,6,6,6},\,b,\,\sigma \in C_{b}^{3,6},\) and the matrix \(\left|\sigma^{-1}\right|\) is bounded. Then for \( n = 0, 1, \ldots, N\), we have the estimates
\begin{equation*}
\left|R^{n}_{Y}+\mathbb E_{t_n}^{X^n}[\tilde R^{n+1}_{Y}]\right|  \leq C (\Delta t)^{\min\{3,\gamma+1\}},\,	\left|\tilde R^{n}_{Y}+\mathbb E_{t_n}^{X^n}[ R^{n+1}_{Y}]\right|  \leq C (\Delta t)^{\min\{3,\gamma+1\}}.
\end{equation*} 
\end{lemma}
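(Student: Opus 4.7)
For part (1), the bound is immediate from \sref{Lemma}{lemma3.1} and the triangle inequality: \sref{Lemma}{lemma3.1} records $|R^n_Y|\le C(\Delta t)^2$ under the Part (1) assumptions, and since $\tilde R^n_Y$ is structurally identical to $R^n_Y$ with the roles of $f_1$ and $f_2$ interchanged, the same argument yields $|\tilde R^n_Y|\le C(\Delta t)^2$, so both combined bounds follow. The real content is Part (2).

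For part (2), the plan is a Taylor-cancellation argument that exploits the alternating explicit/implicit pairing. Introduce the auxiliary functions
\[
\phi_i(s) := \mathbb E_{t_n}^{X^n}\bigl[f_i(s, X_s^{t_n,X^n}, Y_s^{t_n,X^n}, Z_s^{t_n,X^n})\bigr], \qquad i=1,2.
\]
Under the smoothness assumptions, the nonlinear Feynman-Kac representation together with standard PDE regularity for \myeqref{eq2} give $\phi_i \in C^3([0,T])$ with uniformly bounded derivatives. Using $f = f_1+f_2$ and Taylor-expanding $\phi_1$ at $t_n$ and $\phi_2$ at $t_{n+1}$,
\[
R^n_Y = \tfrac{(\Delta t)^2}{2}\bigl[\phi_1'(t_n)-\phi_2'(t_{n+1})\bigr] + \tfrac{(\Delta t)^3}{6}\bigl[\phi_1''(t_n)+\phi_2''(t_{n+1})\bigr] + O((\Delta t)^4).
\]

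For the cross-interval term, decompose
\[
\mathbb E_{t_n}^{X^n}[\tilde R^{n+1}_Y] = \mathbb E_{t_n}^{X^n}\bigl[G(X_{t_{n+1}}^{t_n,X^n})\bigr] + \mathbb E_{t_n}^{X^n}\bigl[G(X^{n+1})-G(X_{t_{n+1}}^{t_n,X^n})\bigr],
\]
where $G(x):=\tilde R^{n+1}_Y$ viewed as a deterministic function of the initial value $X^{n+1}=x$. For the ideal piece, the Markov/tower property reduces everything to integrals of the same $\phi_i$ over $[t_{n+1},t_{n+2}]$, yielding
\[
\mathbb E_{t_n}^{X^n}\bigl[G(X_{t_{n+1}}^{t_n,X^n})\bigr] = \tfrac{(\Delta t)^2}{2}\bigl[\phi_2'(t_{n+1})-\phi_1'(t_{n+2})\bigr] + \tfrac{(\Delta t)^3}{6}\bigl[\phi_2''(t_{n+1})+\phi_1''(t_{n+2})\bigr] + O((\Delta t)^4).
\]
When summed with $R^n_Y$, the $\phi_2'(t_{n+1})$ terms cancel, leaving $\tfrac{(\Delta t)^2}{2}[\phi_1'(t_n)-\phi_1'(t_{n+2})] = -(\Delta t)^3\phi_1''(t_n) + O((\Delta t)^4)$, so the ideal sum is $O((\Delta t)^3)$. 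For the replacement piece, the differences $f_{i,s}-f_{i,t_\ast}$ are $O(\Delta t)$ uniformly in the initial data even after taking spatial derivatives, so $G$ inherits the scaling $\|G\|_{C_b^m}=O((\Delta t)^2)$ for the order $m$ required by \sref{Hypothesis}{H2.2}; applying that hypothesis then gives $|\mathbb E_{t_n}^{X^n}[G(X^{n+1})-G(X_{t_{n+1}}^{t_n,X^n})]| \le C(\Delta t)^{\gamma+1}$. Combining both pieces yields the $(\Delta t)^{\min\{3,\gamma+1\}}$ bound, and the estimate for $\tilde R^n_Y + \mathbb E_{t_n}^{X^n}[R^{n+1}_Y]$ follows by the entirely symmetric argument that swaps the roles of $f_1$ and $f_2$.

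The main obstacle is verifying the two regularity claims: the $C^3$-smoothness of $\phi_i(\cdot)$ in time and the derivative scaling of $G(\cdot)$ in space. Since $\phi_i$ is the conditional expectation of $f_i$ along a Markov diffusion, each time derivative pulls down spatial derivatives of the value function $u$, of $b$, and of $\sigma$ via It\^o's formula and the backward PDE, and the hypotheses $\Phi\in C_b^{6,\alpha}$, $f_i\in C_b^{3,6,6,6}$, $b,\sigma\in C_b^{3,6}$ are tailored precisely so that all derivatives through order three in time (and six in space) remain uniformly bounded; the analogous bounds on $G$ reduce to classical regularity estimates for FBSDEs with respect to initial data. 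Once these bounds are in hand, what remains is careful Taylor bookkeeping.
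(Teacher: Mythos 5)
Your proof is correct and rests on the same mechanism as the paper's: Taylor-expand around the shared node $t_{n+1}$ so that the $O((\Delta t)^2)$ asymmetry terms produced by the explicit-$f_1$/implicit-$f_2$ step cancel against those of the subsequent explicit-$f_2$/implicit-$f_1$ step, and control the mismatch between the exact flow $X_{t_{n+1}}^{t_n,X^n}$ and the numerical $X^{n+1}$ via \sref{Hypothesis}{H2.2}. The packaging differs: the paper first splits $R_Y^n+\mathbb E_{t_n}^{X^n}[\tilde R_Y^{n+1}]$ into a two-step trapezoidal error $I$ (delegated to the Crank--Nicolson estimate of \citep{ZhaoWang2009}) plus $\Delta t$ times a symmetric second difference of $g=\tfrac12(f_2-f_1)$, whereas you expand $\phi_1,\phi_2$ directly; and your part (1) is a triangle-inequality shortcut through \sref{Lemma}{lemma3.1} rather than the unified computation. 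Both routes are sound. One bookkeeping slip: from your own premises ($\|G\|_{C_b^m}=O((\Delta t)^2)$ together with the \emph{first} estimate in \sref{Hypothesis}{H2.2}) the replacement piece comes out as $O((\Delta t)^{\beta+3})$, not $C(\Delta t)^{\gamma+1}$ --- the exponent $\gamma$ in that hypothesis governs the weak error against $\Delta W_{t_{n+1}}^\top$, which never appears in a $Y$-type residual; since $(\Delta t)^{\beta+3}\le(\Delta t)^{3}$, your argument actually yields a bound at least as strong as the stated $(\Delta t)^{\min\{3,\gamma+1\}}$, so the conclusion stands, but the line as written does not follow from what precedes it.
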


\begin{proof}
\noindent	(1.)	Without loss of generality, we assume \(n\) is an even number.	By \myeqref{Rny} and \myeqref{tRny}, we have
\begin{align}\label{eq3.25}
&	R_Y^{n}+\mathbb E_{t_n}^{X^n}[\tilde R^{n+1}_{Y}]\nonumber\\ =& \int_{t_n}^{t_{n+1}}  \mathbb E_{t_n}^{X^n}[f_s^{t_n, X^n}-f_{2,t_{n+1}}^{t_n, X^n}- f_{1,t_n}^{t_n, X^n}]   ds+\int_{t_{n+1}}^{t_{n+2}}  \mathbb E_{t_n}^{X^n}[f_s^{t_{n+1}, X^{n+1}}-f_{1,t_{n+2}}^{t_{n+1}, X^{n+1}} - f_{2,t_{n+1}}^{t_{n+1}, X^{n+1}}]  ds\nonumber\\ 
=&\int_{t_n}^{t_{n+1}}  \mathbb E_{t_n}^{X^n}[f_s^{t_n, X^n}-\frac{1}{2}(f_{t_{n+1}}^{t_n, X^n}+f_{t_n}^{t_n, X^n})]  ds+ \int_{t_{n+1}}^{t_{n+2}}  \mathbb E_{t_n}^{X^n}[f_s^{t_{n+1}, X^{n+1}}-\frac{1}{2}(f_{t_{n+2}}^{t_{n+1}, X^{n+1}} + f_{t_{n+1}}^{t_{n+1}, X^{n+1}})]  ds\nonumber\\ 
+&\Delta t \mathbb E_{t_n}^{X^n}[\frac{1}{2}f_{1,t_{n+1}}^{t_n, X^n}+\frac{1}{2} f_{2,t_n}^{t_n, X^n}-\frac{1}{2}f_{2,t_{n+1}}^{t_n, X^n}-\frac{1}{2} f_{1,t_n}^{t_n, X^n}       ]\nonumber\\ 
+&\Delta t \mathbb E_{t_n}^{X^n}[  \frac{1}{2}f_{2,t_{n+2}}^{t_{n+1}, X^{n+1}} + \frac{1}{2}f_{1,t_{n+1}}^{t_{n+1}, X^{n+1}}  -\frac{1}{2}f_{1,t_{n+2}}^{t_{n+1}, X^{n+1}} - \frac{1}{2}f_{2,t_{n+1}}^{t_{n+1}, X^{n+1}}] \nonumber\\ =&I+\Delta t \mathbb E_{t_n}^{X^n}[g_{t_{n}}^{t_n, X^n}-g_{t_{n+1}}^{t_n, X^n}  + g_{t_{n+2}}^{t_{n+1}, X^{n+1}} - g_{t_{n+1}}^{t_{n+1}, X^{n+1}} ]
, 
\end{align}
where \(I:=\int_{t_n}^{t_{n+1}}  \mathbb E_{t_n}^{X^n}[f_s^{t_n, X^n}-\frac{1}{2}(f_{t_{n+1}}^{t_n, X^n}+f_{t_n}^{t_n, X^n})]  ds+ \int_{t_{n+1}}^{t_{n+2}}  \mathbb E_{t_n}^{X^n}[f_s^{t_{n+1}, X^{n+1}}-\frac{1}{2}(f_{t_{n+2}}^{t_{n+1}, X^{n+1}} + f_{t_{n+1}}^{t_{n+1}, X^{n+1}})]  ds,\,g := \frac{1}{2}(f_{2}-f_{1}).\)
Then by \citep{ZhaoWang2009}, we obtain
\begin{equation}\label{eq3.26}
\left|I\right|\leq C (\Delta t)^{2}.\end{equation}
Define \[G_{1}(s): = \mathbb E_{t_n}^{X^n}[g(s,X_{s}^{t_n, X^n})],\,s \in [t_{n},\,t_{n+1}],\]
\[G_{2}(s): = \mathbb E_{t_n}^{X^n}[g(s,X_{s}^{t_{n+1}, X^{n+1}})],\,s \in [t_{n+1},\,t_{n+2}].\]
By using \sref{Hypothesis}{H2.2}, we have \(\left|G'_{2}(t_{n+1})-G'_{1}(t_{n+1})\right|\leq C (\Delta t)^{2}.\)
Therefore,
\begin{align}\label{eq3.27}
&\mathbb E_{t_n}^{X^n}[g_{t_{n}}^{t_n, X^n}-g_{t_{n+1}}^{t_n, X^n}  + g_{t_{n+2}}^{t_{n+1}, X^{n+1}} - g_{t_{n+1}}^{t_{n+1}, X^{n+1}}] \nonumber\\ =&G_{1}(t_n)-G_{1}(t_{n+1})+G_{2}(t_{n+2})-G_{2}(t_{n+1})
\nonumber\\ =&G'_{1}(t_{n+1})(-\Delta t)+\frac{1}{2}G''_{1}(\eta_{1})(\Delta t)^{2}+G'_{2}(t_{n+1})\Delta t+\frac{1}{2}G''_{2}(\eta_{2})(\Delta t)^{2} \leq C (\Delta t)^2.
\end{align}
Substituting \myeqref{eq3.25} and \myeqref{eq3.26} into \myeqref{eq3.24}, we obtain \(	\left|R^{n}_{Y}+\mathbb E_{t_n}^{X^n}[\tilde R^{n+1}_{Y}]\right|  \leq C (\Delta t)^{2}.\) Another estimate and (2.) can be proved similarly. 
The proof is completed.
\end{proof}
\begin{proof}[Proof of \sref{Theorem}{Theorem3.2}]
(1.)	Let $\left(X_t^{t_n, X^n}, Y_t^{t_n, X^n}, Z_t^{t_n, X^n}\right)_{t_n \leq t \leq T}(n = 0, 1, \ldots, N)$ be the solutions to the FBSDEs \myeqref{eq2.2}  and $\left(X^n, Y^n, Z^n\right)(n = 0, 1, \ldots, N)$  be the solutions to Scheme 2.1 or Scheme 2.2 with $Y^{N} = \Phi(X^{N})$ and $Z^{N} = \Phi_{x}(X^{N})\sigma(t_{N}, X^{N})$. Under the conditions of the theorem, \sref{Theorem}{Theorem3.1}, \sref{Lemma}{lemma3.1}(1.) and \sref{Lemma}{lemma3.2}(1.) hold. 
Therefore, by using \myeqref{eq3.1}, we have
\begin{align}\label{eq3.28}
& \left|Y_{t_n}^{t_n, X^n} - Y^n\right|^2 + \frac{1}{9} \Delta t \left|Z_{t_n}^{t_n, X^n} - Z^n\right|^2 \nonumber\\
& \leq C \mathbb{E}_{t_n}^{X^n} \left[  \left|Y_{t_{N-1}}^{t_{N-1}, X^{N-1}} - Y^{N-1}\right|^2 \right] + C (\Delta t)^{\min\{3,2\gamma+1\}}N\nonumber\\
& \leq C \mathbb{E}_{t_n}^{X^n} \left[  \left|Y_{t_{N-1}}^{t_{N-1}, X^{N-1}} - Y^{N-1}\right|^2 \right] + CT (\Delta t)^{\min\{2,2\gamma\}}.
\end{align}
%
By \myeqref{eq3.13} and \sref{Lemma}{lemma3.1}(1.), we obtain
\begin{align*}
\Delta t|e_{z}^{N-1}|^{2} &\leqslant C(\Delta t)^{2}\mathbb E_{t_{N-1}}^{X^{N-1}}\left[|e_{z}^{N}|^{2}+|e_{y}^{N}|^{2}\right] + \frac{2}{\Delta t}\left(|R_{Z}^{N-1}|^{2}+|R_{Z1}^{N-1}|^{2}\right) \le C(\Delta t)^{\min\{3,2\gamma+1\}}.
\end{align*}
Then from	\myeqref{eq3.2}, we have
\begin{align}\label{eq3.29}
\left|e^{N-1}_{y}\right|^{2} &\leq C \left\{ \mathbb{E}_{t_{N-1}}^{X^{N-1}} \left[\left|e_y^{N}\right|^2\right]+  (\Delta t)^2 \left|e^{N-1}_{z} \right|^2+(\Delta t)^2 \mathbb E_{t_{N-1}}^{X^{N-1}}[\left|e^{N}_{z}\right|^2] + \left|R_Y^{N-1}\right|^2+ \left|R_{Y1}^{N-1} \right|^2\right\}\nonumber\\
&\le C(\Delta t)^{\min\{4,2\gamma+2\}}.	\end{align}
Substituting \myeqref{eq3.29} into \myeqref{eq3.28}, we obtain \[\left|Y_{t_n}^{t_n, X^n} - Y^n\right|^2 + \frac{1}{9} \Delta t \left|Z_{t_n}^{t_n, X^n} - Z^n\right|^2 \le C(\Delta t)^{\min\{2,2\gamma+2\}}.\]
The proof of (2.) is similar. We complete the proof.
\end{proof}

\section{Numerical tests}\label{sec4}
In this chapter, we will verify the accuracy and effectiveness of the proposed Schemes 2.1 and 2.2 through two numerical examples, and validate the theoretical results mentioned above.
First we need a subdivision of d-dimensional Euclidean space \(\mathbb R ^{d}:\)
\begin{equation}\label{eq4.1}
\mathcal R^{d}_{h}= \left\{ \mathtt x_{i} = (x_{i_{1}}^{1},\,x_{i_{2}}^{2},\,\cdots,\,x_{i_{d}}^{d})^{T}\mid  \mathtt x_{i} \in \mathbb{R}^{d},\,i_{1},\,\cdots,\,i_{d} \in \mathbb{Z}\right\},
\end{equation}
Here, \(\mathbb{Z}\) represents the set of all positive integers. For a fixed time step \(\Delta t = T/N\) , we use a uniform spatial step \(\Delta x = (\Delta t)^{\frac{p+1}{r+1}}\) in all directions, where (p) is the order of convergence in time, and (r) is the order of spatial interpolation. Therefore, in myeqref{eq4.1}, \(x_{i_{k}}^{k} = i_{k} \Delta x, \,k =1,\,\cdots,\,d.\)
We choose the Gauss–Hermite quadrature formula to approximate the conditional mathematical expectation that needs to be estimated, and we set the number of Gauss integration points to be sufficiently large. The values of the integrand required when calculating the conditional mathematical expectation are obtained through cubic spline interpolation. We denote the running time by RT and the convergence order obtained through linear least-square fitting by CR.

We use a terminal time of \((T=1.0.)\) Let\(((Y_{0},Z_{0}))\)  denote the true solution of the forward–backward stochastic differential equation myeqref{eq2.2} at the initial time, and \((Y^{0},Z^{0})\) denote the approximate solution at (n=0) obtained through an approximate solution of myeqref{eq2.2}.

We denote our Scheme 2.1 and 2.2 as S1 and S2, respectively.All numerical experiments were performed using CPU computations, and the software used Python 3.11.13.

\begin{example}\label{example4.1}(Convergence test)
Consider a one-dimensional Heston model proposed by \citep{Cerny2008} 
\begin{equation}\label{eq4.2}
\begin{cases}
S_t = S_0 + \int_{0}^{t} S_t (\mu X_t dt + \sqrt{X_{s}} dW^1_t),\\
X_t = X_0 + \int_{0}^{t} \kappa (\theta -X_s)ds + \int_{0}^{t} \sigma \sqrt{X_{s}} (\rho dW^1_s+\sqrt{1-\rho^2}dW^2_s),
\end{cases}
\end{equation}
where \( \mu,\,\kappa,\,\theta,\,\sigma>0, \, -1 \leq \rho \leq 1\) and \(2\kappa \theta \geq \sigma^2.\)
Consider the backward stochastic Riccati equation  
\begin{equation}\label{eq4.3}
Y_t = 1- \int_t^T \left( Y_s \mu^2X_s+2\mu \sqrt{X_s}Z^1_s+\frac{\left|Z^1_s\right|^2}{Y_s } \right)ds -\int_t^T Z^1_sdW^1_s- \int_t^T Z^2_sdW^2_s
\end{equation}
where \(Y_t\) is named as opportunity process in \citep{Cerny2008}, and they gave expression of \(Y\) as following:
\begin{equation}\label{eq4.4}
Y_t = \exp\{\chi^0(t)+\chi^1(t) X_t\},
\end{equation}
where \(\chi^0(t)\) and \(\chi^1(t)\)is defined at \(t\in [0,\,T]\) by
\begin{align*}
\chi_0(t) &=F \left( -\frac{ B}{2C}(T-t) 
- \frac{1}{C} \log \left( 
\frac{(B + D)e^{- D(T-t)/2} - ( B -  D)e^{ D(T-t)/2}}{2 D}
\right) \right), \\[6pt]
\chi_1(t) &= -\frac{ B}{2 C} 
+  \frac{ D}{2 C}  \frac{( B +  D)e^{- D(T-t)/2} + ( B -  D)e^{ D(T-t)/2}}
{( B +  D)e^{- D(T-t)/2} - ( B -  D)e^{ D(T-t)/2}},
\end{align*}
where
\begin{align*}
A &:= -\mu^2,  B := -\kappa - 2\rho\sigma\mu,
C :=  \frac{1}{2}\sigma^2(1-2\rho^2), \\
D &:= \sqrt{ B^2 - 4 A C},		 F := \kappa\theta,
\end{align*}
with \(C,D\neq 0.\)
For simplify，we use a one-dimensional Brownian motion \(dW_s = \rho dW^1_s+\sqrt{1-\rho^2}dW^2_s,\) and rewrite \myeqref{eq4.3} as
\[ d Y_s= \left( Y_s \mu^2X_s+2\mu \sqrt{X_s} \rho  Z_s+\frac{\left|\rho Z_s\right|^2}{Y_s } \right)ds + \tilde Z_sdW_s.\]
Then we have \[ Z^1_s = \rho  Z_s,\, Z^2_s = \sqrt{1-\rho^2} Z_s.\]
We set \(f_1(s,x,y,z) :=-y \mu^2x-2\mu \sqrt{x}\rho z,\,f_2(s,x,y,z):=-\frac{\left|\rho z\right|^2}{y}.  \mu=0.3,\,\kappa=0.5,\,\theta=0.8,\,\sigma=0.2 \rho =0.8. X_0=1.0.\)
To compare the convergence rate dominated by the forward SDE discretization, we vary the forward schemes among Euler, Milstein, and the weak order-2 It\^{o}--Taylor method. We use \myeqref{eq4.4} compute the explicit solution.	For each $N$, we compute the errors in $Y_0$ and $Z_0$ with respect to a reference solution obtained on a very fine grid. The errors \(\left|Y_{0}-Y^0\right|\) and \(\left|Z_{0}-Z^0\right|\), toghther with convergence rate are in table 1, in which we use WT2 to denote Weak order-2.0 scheme. The convergence rates of the two schemes are of order 1 if the Euler scheme or the Milstein scheme is used for solving the SDE, and are of order 2 if the order-2 weak It\^o-Taylor scheme is used for solving the SDE.
\begin{table}[htbp]
	\centering
	\caption{Numerical errors and observed convergence rates.}
	\label{tab:err_cr_no_rt}
	\setlength{\tabcolsep}{6pt}
	\renewcommand{\arraystretch}{1.15}
	
	\begin{tabular}{c cc cc cc}
		\toprule
		\multirow{2}{*}{$\Delta t$}
		& \multicolumn{2}{c}{Euler}
		& \multicolumn{2}{c}{Milstein}
		& \multicolumn{2}{c}{WT2} \\
		\cmidrule(lr){2-3}\cmidrule(lr){4-5}\cmidrule(lr){6-7}
		& $|Y_0-Y^0|$ & $|Z_0-Z^0|$
		& $|Y_0-Y^0|$ & $|Z_0-Z^0|$
		& $|Y_0-Y^0|$ & $|Z_0-Z^0|$ \\
		\midrule
		\multicolumn{7}{l}{\textbf{S1 (Scheme 2.1)}} \\
		\midrule
		$1/16$  & $8.25{\times}10^{-5}$ & $1.62{\times}10^{-4}$
		& $8.23{\times}10^{-5}$ & $1.61{\times}10^{-4}$
		& $1.08{\times}10^{-5}$ & $7.91{\times}10^{-6}$ \\
		$1/32$  & $4.32{\times}10^{-5}$ & $8.10{\times}10^{-5}$
		& $4.31{\times}10^{-5}$ & $8.05{\times}10^{-5}$
		& $2.70{\times}10^{-6}$ & $2.00{\times}10^{-6}$ \\
		$1/64$  & $2.21{\times}10^{-5}$ & $4.05{\times}10^{-5}$
		& $2.20{\times}10^{-5}$ & $4.03{\times}10^{-5}$
		& $6.77{\times}10^{-7}$ & $5.02{\times}10^{-7}$ \\
		$1/128$ & $1.12{\times}10^{-5}$ & $2.03{\times}10^{-5}$
		& $1.11{\times}10^{-5}$ & $2.01{\times}10^{-5}$
		& $1.69{\times}10^{-7}$ & $1.26{\times}10^{-7}$ \\
		\midrule
		CR
		& \multicolumn{1}{c}{\textbf{0.963}} & \multicolumn{1}{c}{\textbf{0.999}}
		& \multicolumn{1}{c}{\textbf{0.963}} & \multicolumn{1}{c}{\textbf{0.998}}
		& \multicolumn{1}{c}{\textbf{1.996}} & \multicolumn{1}{c}{\textbf{1.992}} \\
		\bottomrule
	\end{tabular}
	
	\vspace{0.8em}
	
	\begin{tabular}{c cc cc cc}
		\toprule
		\multirow{2}{*}{$N$}
		& \multicolumn{2}{c}{Euler}
		& \multicolumn{2}{c}{Milstein}
		& \multicolumn{2}{c}{WT2} \\
		\cmidrule(lr){2-3}\cmidrule(lr){4-5}\cmidrule(lr){6-7}
		& $|Y_0-Y^0|$ & $|Z_0-Z^0|$
		& $|Y_0-Y^0|$ & $|Z_0-Z^0|$
		& $|Y_0-Y^0|$ & $|Z_0-Z^0|$ \\
		\midrule
		\multicolumn{7}{l}{\textbf{S2 (Scheme 2.2)}} \\
		\midrule
		$16$  & $1.30{\times}10^{-4}$ & $3.62{\times}10^{-4}$
		& $1.29{\times}10^{-4}$ & $3.61{\times}10^{-4}$
		& $1.08{\times}10^{-5}$ & $8.18{\times}10^{-6}$ \\
		$32$  & $6.78{\times}10^{-5}$ & $1.83{\times}10^{-4}$
		& $6.76{\times}10^{-5}$ & $1.82{\times}10^{-4}$
		& $2.72{\times}10^{-6}$ & $2.07{\times}10^{-6}$ \\
		$64$  & $3.46{\times}10^{-5}$ & $9.17{\times}10^{-5}$
		& $3.46{\times}10^{-5}$ & $9.12{\times}10^{-5}$
		& $6.81{\times}10^{-7}$ & $5.19{\times}10^{-7}$ \\
		$128$ & $1.75{\times}10^{-5}$ & $4.59{\times}10^{-5}$
		& $1.75{\times}10^{-5}$ & $4.57{\times}10^{-5}$
		& $1.70{\times}10^{-7}$ & $1.30{\times}10^{-7}$ \\
		\midrule
		CR
		& \multicolumn{1}{c}{\textbf{0.963}} & \multicolumn{1}{c}{\textbf{0.993}}
		& \multicolumn{1}{c}{\textbf{0.963}} & \multicolumn{1}{c}{\textbf{0.994}}
		& \multicolumn{1}{c}{\textbf{1.996}} & \multicolumn{1}{c}{\textbf{1.992}} \\
		\bottomrule
	\end{tabular}
\end{table}

\end{example}

\begin{example}\label{example4.2}(See \citep{Zheng2025}) Consider the following two-dimensional FBSDE；
\begin{equation*}
\begin{cases}
X_t = X_0 + \sigma W_t,\\
Y_t = Y_T + \int_t^T \left(Y_t+ \frac{5}{2}\sigma^2 e^{-2t} \frac{Y_t}{Y_t^2 + (Z_t \tilde{\sigma})^2} \right)  ds - \int_t^T Z_s  dW_s,
\end{cases}
\end{equation*}
with $Y_T = e^{-T} \sin(X_T^1 + 2X_T^2)$. Here $Z_t = (Z_t^1, Z_t^2)$, $\tilde{\sigma} = (\frac{3}{\sigma}, -\frac{1}{\sigma})^\top$, and $W_t$ is a standard two-dimensional Brownian motion. The analytical solution is given by
\begin{equation*}
\begin{cases}
Y_t = e^{-t} \sin(X_t^1 + 2X_t^2), \\
Z_t = \left( \sigma e^{-t} \cos(X_t^1 + 2X_t^2),\; 2\sigma e^{-t} \cos(X_t^1 + 2X_t^2) \right).
\end{cases}
\end{equation*}
In this example, we set $\sigma = 0.2$. We set \(f_1(y)=y,\,f_2(t,x,y,z)= \frac{5}{2}\sigma^2 e^{-2t} \frac{y}{y^2 + (z \tilde{\sigma})^2}.\) In Tables 2 and 3, we denote Scheme 2.2 in \citep{ZhaoLi2014} as Z1, and Scheme 2.1 proposed in this paper as S1.
Table 2 reports the total numbers of Picard iterations required by the two implicit schemes under different time discretizations, while Table 3 presents the corresponding numerical errors and observed convergence orders.

From Table 3, it can be observed that S1 yields smaller errors for the $Y$, component but slightly larger errors for the $Z_1$ and $Z_2$ components compared with Z1.
Nevertheless, by sacrificing only a limited amount of accuracy, S1 significantly reduces the computational cost in the implicit stage, saving approximately 80\% of the Picard iterations, and thus achieves a more favorable accuracy–cost trade-off.
	\begin{table}[htbp]
	\centering
	\caption{Total number of inner Picard iterations for different $N$ (lower is better).}
	\label{tab:total_picard}
	\begin{tabular}{c|cc|c}
		\hline
		$N$ & Z1 & S1 & Ratio (S1/Z1) \\
		\hline
		8   & 30   & 8    & 0.267 \\
		16  & 84   & 56   & 0.667 \\
		32  & 310  & 55   & 0.177 \\
		64  & 810  & 275  & 0.339 \\
		128 & 2890 & 599  & 0.207 \\
		\hline
	\end{tabular}
\end{table}

\begin{table}[htbp]
	\centering
	\caption{$L^2$ errors and observed convergence orders of $Y$, $Z_1$ and $Z_2$.}
	\label{tab:accuracy}
	\setlength{\tabcolsep}{6pt}
	\begin{tabular}{c|ccc|ccc}
		\hline
		\multirow{2}{*}{$N$} 
		& \multicolumn{3}{c|}{Z1} 
		& \multicolumn{3}{c}{S1} \\
		\cline{2-7}
		& $|Y_0-Y^0|$ & $|Z_0^1-Z^0_1|$ & $|Z_0^2-Z^0_2|$ 
		& $|Y_0-Y^0|$ & $|Z_0^1-Z^0_1|$& $|Z_0^2-Z^0_2|$ \\
		\hline
		8   
		& $8.03{\times}10^{-4}$ & $7.88{\times}10^{-4}$ & $1.56{\times}10^{-3}$ 
		& $7.89{\times}10^{-4}$ & $2.28{\times}10^{-3}$ & $4.66{\times}10^{-3}$ \\
		
		16  
		& $1.98{\times}10^{-4}$ & $1.92{\times}10^{-4}$ & $4.02{\times}10^{-4}$ 
		& $1.77{\times}10^{-4}$ & $5.99{\times}10^{-4}$ & $1.21{\times}10^{-3}$ \\
		
		32  
		& $4.58{\times}10^{-5}$ & $5.39{\times}10^{-5}$ & $1.11{\times}10^{-4}$ 
		& $3.69{\times}10^{-5}$ & $1.60{\times}10^{-4}$ & $3.24{\times}10^{-4}$ \\
		
		64  
		& $9.70{\times}10^{-6}$ & $1.60{\times}10^{-5}$ & $3.24{\times}10^{-5}$ 
		& $6.33{\times}10^{-6}$ & $4.42{\times}10^{-5}$ & $8.89{\times}10^{-5}$ \\
		
		128 
		& $1.71{\times}10^{-6}$ & $5.01{\times}10^{-6}$ & $1.01{\times}10^{-5}$ 
		& $4.49{\times}10^{-7}$ & $1.27{\times}10^{-5}$ & $2.54{\times}10^{-5}$ \\
		\hline
		Order 
		& 2.210 & 1.818 & 1.818 
		& 2.637 & 1.874 & 1.881 \\
		\hline
	\end{tabular}
\end{table}
\end{example}

\section{Conclusions}

In this paper, we split the generator $f$ into the sum of two functions. In the computation of the value process $Y$, explicit and implicit schemes are alternately applied to these two generators, while the algorithms from \citep{ZhaoLi2014} are used for the control process $Z$. We rigorously prove that the two new schemes have second-order convergence rate. The proposed splitting methods show clear advantages for equations whose generator consists of a linear part and a nonlinear part, as they reduce the number of iterations required for solving implicit schemes, thereby decreasing computational cost while maintaining second-order convergence.

\renewcommand{\refname}{\normalsize References}

\end{document}